\documentclass[11pt,preprint]{imsart}

\textwidth = 6.0 in 
\textheight = 9.0 in 
\oddsidemargin = 0.3 in 
\topmargin = -0.7 in
\evensidemargin = 0.3 in  
\headheight = 0.2 in
\headsep = 0.1in \parskip = 0.04in \parindent = 0.2in

\usepackage{amsthm,amssymb,amsmath,amsfonts,textcomp} 
\usepackage[normalem]{ulem} 
\usepackage{graphicx,caption,subcaption,stfloats,tikz,epstopdf} 
\usepackage{array,enumerate,url,verbatim,xcolor,bbm,bm}

\RequirePackage[round,colon,authoryear]{natbib}
\RequirePackage[colorlinks,citecolor=blue,urlcolor=blue]{hyperref}
\usepackage{color,fancyvrb,mathtools, undertilde}
\newtheorem{theorem}{Theorem}[section]
\newtheorem*{theorem*}{Theorem}

\newtheorem{lemma}{Lemma}
\newtheorem*{lemma*}{Lemma}

\newtheorem{remark}{Remark}[section]
\newtheorem{example}{Example}[section]

\providecommand{\abs}[1]{\left\lvert#1\right\rvert}
\providecommand{\norm}[1]{\left\lVert#1\right\rVert}

\renewcommand{\hat}{\widehat}

\newcommand{\bfm}[1]{\ensuremath{\mathbf{#1}}}

   \def\bB{\bfm B}   
      \def\cC{{\cal  C}}
   \def\bD{\bfm D}   \def\cD{{\cal  D}}
     \def\EE{\mathbb{E}} 
     
   \def\bG{\bfm G}   
      
   \def\bI{\bfm I}   
   \def\bJ{\bfm J}   
   \def\bK{\bfm K}   
\def\hbbeta{\bfm l}      
   \def\bM{\bfm M}

     \def\PP{\mathbb{P}} 
      
   \def\bR{\bfm R}  \def\RR{\mathbb{R}}

\def\bu{\bfm u}      
\def\bv{\bfm v}      
      
\def\bx{\bfm x}   \def\bX{\bfm X}   
\def\by{\bfm y}      
\def\bz{\bfm z}      


\def\polylog{{\rm PolyLog}}

\def\LO{{\rm LO}}
\def\OO{{\rm OO}}

\def\dphi{\dot{\phi}}
\def \dl{\dot{\ell}}
\def \ddl{\ddot{\ell}}
\def \prox{\mathbf{prox}}

\newcommand{\bfsym}[1]{\ensuremath{\boldsymbol{#1}}}

\def\bbeta{\bfsym \beta}

           \def\bTheta {\bfsym {\Theta}}
          
             \def\bSigma{\bfsym \Sigma}

\def\bxi{\bfsym {\xi}} \def\bXi{\bfsym {\Xi}}

\def\bPi{\bfsym{\Pi}}

              
                \def\hbbeta{\hat{\bfsym \beta}}


\DeclareMathOperator{\argmin}{argmin}

\DeclareMathOperator{\diag}{diag}

\DeclareMathOperator{\var}{var}

\DeclareMathOperator{\tr}{tr}

\newcommand\numberthis{\addtocounter{equation}{1}\tag{\theequation}}


\bibliographystyle{plainnat}

\begin{document}
\begin{frontmatter}

\title{Theoretical Analysis of Leave-one-out Cross Validation for Non-differentiable Penalties under High-dimensional Settings}

\begin{aug}
    \author{\fnms{Haolin} \snm{Zou,}\thanksref{m2}}
    \author{\fnms{Arnab} \snm{Auddy,}\thanksref{m1}}
    
    \author{\fnms{Kamiar} \snm{Rahnama Rad,}\thanksref{m3}}
    \and
    \author{\fnms{Arian} \snm{Maleki}\thanksref{m2}}
    
    \address{\thanksmark{m1} University of Pennsylvania\\
        \thanksmark{m2} Columbia University\\
        \thanksmark{m3} Baruch College, City University of New York
        }
\end{aug}

\begin{abstract}
Despite a large and significant body of recent work focused on estimating the out-of-sample risk of regularized models in the high dimensional regime, a theoretical understanding of this problem for non-differentiable penalties such as generalized LASSO and nuclear norm is missing. In this paper we resolve this challenge. We study this problem in the proportional high dimensional regime where both the sample size $n$ and number of features $p$ are large, and $n/p$ and the signal-to-noise ratio (per observation) remain finite. We provide finite sample upper bounds on the expected squared error of  leave-one-out cross-validation ($\LO$) in estimating the out-of-sample risk. 
The theoretical framework presented here provides a solid foundation for elucidating empirical findings that show the accuracy of $\LO$.
\end{abstract}

\end{frontmatter}

\section{Introduction}
\subsection{Main goal}
Estimating the out-of-sample risk (OO) to assess model performance and to set model complexity is a crucial task in statistical learning. A large body of recent work has studied the theoretical and empirical properties of cross-validation (or its variants) for estimating OO \citep{rad2020error,rad2018scalable,patil2021,patil2022,patil5failures,xu2021consistent,bellec2023out,beirami2017optimal,HaolinOnlineAppendix,stephenson2020approximate,stephenson2021can,luo2023iterative}. 

In spite of these recent advances, a complete quantitative description of the risk in using leave-one-out cross-validation (LO)  is missing when it comes to problems that have the following characteristics 1) non-differentiable penalties such as generalized LASSO and nuclear norm, and 2) finite signal-to-noise ratio when the dimension of the feature space grows proportionally with the number of observations. In this paper, we focus on the  aforementioned  characteristics for regularized generalized linear models (including logistic and Poisson regression). We use minor assumptions about the data generating process to provide finite sample upper bounds on the expected squared error of leave-one-out cross-validation in estimating the out-of-sample error. A consequence of our theory is that  leave-one-out cross-validation is a  consistent estimator of the out-of-sample risk. 

\subsection{Problem statement and related work}\label{sec:prblm-state}
Consider the dataset $\mathcal{D}=\{(y_i,\mathbf{x}_i)\}_{i=1}^n$, where $\mathbf{x}_i\in\mathbb{R}^p$ and $y_i\in \mathbb{R}$ denote the features and response of the $i^{\rm th}$ data point, respectively. Observations are independent and identically distributed (iid) draws from some unknown joint distribution $q(y_i|\mathbf{x}_i^\top\bbeta^*)p(\mathbf{x}_i)$, where $\bbeta^* \in \mathbb{R}^p$ represents the true parameter. 

The class of estimates, known as regularized empirical risk minimizers (R-ERM),  are based on the  following optimization:
\[
    \hbbeta:=\underset{\bbeta\in \bTheta}{\argmin}\sum_{i=1}^n\ell(y_i, \bx_i^\top\bbeta)+\lambda r(\bbeta),
    \numberthis\label{eq:opt:prob}
\]
where $\ell(y,z)$ is a non-negative convex loss function between $y$ and $z$, e.g., square loss $\ell(y,z)=\frac12 (y-z)^2$, $r(\bbeta)$ is a non-negative convex regularizer, and $\bTheta$ is a convex set to which $\bbeta^*$ belongs. By selecting particular forms for $\ell$ and $r$ we can cover a wide range of popular estimators within the high-dimensional setting. For instance, one may use logistic or Poisson for the loss function, and fused LASSO or nuclear norm for the regularizer. 

One widely used criterion for model selection and evaluating the accuracy of $\hat{\bbeta}$ is the out-of-sample prediction error ($\OO$), defined as
$$
\OO :=\EE[\phi(y_0,\bx_0^\top\hbbeta)|\cD],
\label{eq:OO}
$$
where $\phi(y,z)$ is a function measuring the difference between $y$ and $z$ (often, but not necessarily the same as $\ell(y,z)$), and $(y_0,\bx_0)$ is a sample from the same joint distribution $q(y|\bx^\top\bbeta^*)p(\bx)$, independent of the training set $\cD=\{(y_i,\bx_i)\}_{i=1}^n$. 

Several methods are proposed to estimate $\OO$ such as
$K$-fold cross-validation, Generalized Cross-Validation, and Bootstrap, among others. In this paper, we employ the term ``risk estimate" as a generic term for referring to these techniques. Despite the abundance of theoretical and empirical results in the literature, studies specifically examining the accuracy of risk estimates in high-dimensional settings, where the number of features is either larger than or comparable to the sample size $n$, are noticeably scarce. As a result many fundamental questions regarding the performance of risk estimates have remained open. 

The main objective of this paper is to prove the accuracy of the leave one out risk estimate $\LO$ for a large class of R-ERMs under the high-dimensional setting. 
For $1\leq i\leq n$, define
\[
    \hbbeta_{/i}:=\underset{\bbeta\in \bTheta}{\argmin}\sum_{j\neq i} \ell(y_j, \bx_j^\top\bbeta)+\lambda r(\bbeta)\numberthis\label{eq:beta_wo_i}.
\]
$\LO$ is defined as:
\[
    \LO:= \frac1n \sum_{i=1}^n\phi(y_i,\bx_i^\top\hbbeta_{/i}).
\]
The success of $\LO$ in practical scenarios has encouraged numerous researchers to investigate its accuracy and develop several computationally-efficient approximations for it \citep{rad2020error,rad2018scalable,patil2021,patil2022,patil5failures,xu2021consistent,bellec2023out,beirami2017optimal,HaolinOnlineAppendix,stephenson2020approximate,stephenson2021can,luo2023iterative, austern2020asymptotics}. Notably, key references that are related to our current work are \citep{rad2020error, burman88}. In their study, \citep{burman88} studied the accuracy of $\LO$ in the low-dimensional setting, $p$ fixed, while $n \rightarrow \infty$ and 
established the consistency of $\LO$. In other words, they proved that $\LO
\stackrel{p}{\to}\OO$. In the more recent paper \citep{rad2020error}, the authors studied the accuracy of the $\LO$ under the high-dimensional setting, similar to the setting we consider in this paper. However, it is worth noting that the conclusions drawn in \citep{rad2020error} are contingent upon the following two assumptions that restrict the broader applicability of their outcomes:
\begin{enumerate}
    \item Restriction 1: The regularizer is twice differentiable. 
    \item Restriction 2: The set $\bTheta$ is the same as $\mathbb{R}^p$. 
\end{enumerate}

The first assumption excludes a majority of crucial regularizers commonly employed in practical scenarios, including the $\ell_1$ norm, nuclear norm, group-LASSO, etc. Additionally, the second assumption eliminates scenarios where parameters adhere to a specific set. Instances of such situations arise when a practitioner seeks to estimate a positive definite matrix or when the elements of $\bbeta^*$ are required to be positive, increasing, or both.

Our paper aims to eliminate Restriction 1 and Restriction 2, which constrained the applicability of the results in \citep{rad2020error} to numerous real-world problems, and still obtain a sharp upper bound on the error $\LO-\OO$.


The main technical novelties that have enabled us to achieve these goals are clarified in Section \ref{sec:main}. 

\subsection{Notations}\label{subsec:notations}

In this section, we summarize the symbols we use throughout the article. Vectors are denoted with boldfaced lowercase letter, such as $\bx$. Matrices are represented by boldfaced capital letters, such as $\bX$. For a matrix $\bX$, $\sigma_{\min}(\bX)$, $\|\bX\|$, $\tr(\bX)$ denote the minimum singular value, the spectral norm (equal to the maximum singular value $\sigma_{\max}(\bX)$), and the trace of the matrix $\bX$ respectively. 

We denote $\cD$ as the full dataset $\{(y_i,\bx_i)\}_{i=1}^n$, and $\cD_{/i}$ as the dataset excluding the $i^{\rm th}$ observation, i.e. $\cD_{/i} = \{ (y_j,\bx_j): j\neq i \}$. We also use $[n]:=\{1,2,...,n\}$ for any positive integer $n$.

For brevity we denote $\ell_j(\bbeta):=\ell( y_j, \bx_j^\top\bbeta )$, and similarly $\phi_j(\bbeta):=\phi(y_j, \bx_j^\top\bbeta)$. The following definitions are also used:
\begin{align*}
      \dot{\ell}_i(\bbeta) &:= \frac{\partial \ell(y_i ,z) }{\partial z}\bigg|_{z=\bx_i^\top\bbeta}, \dphi_i(\bbeta):= \frac{\partial \phi(y_i ,z)}{\partial z} \bigg|_{z=\bx_i^\top\bbeta}.
\end{align*}

Polynomials of $\log(n)$ are denoted by $\polylog(n)$. When it is specifically defined, a subscript is added. For $x,y\in\RR$, we write $x\wedge y$ and $x\vee y$ to denote $\min\{x,y\}$ and $\max\{x,y\}$ respectively.

The usage of $O(1)$ and $o(1)$ are conventional, and $a_n=\Theta(1)$ iff both $a_n$ and $a_n^{-1}$ are $O(1)$. Similarly, for a sequence of random variables, $X_n=O_p(1)$ means $X_n$ being stochastically bounded, i.e. $\exists C>0$ s.t. $\PP(|X_n|>C)\to 0$. Similarly $X_n=o_p(1)$ means $X_n$ converge to zero in probability, and $X_n=\Theta_p(1)$ iff 
both $X_n$ and $X_n^{-1}$ are $O_p(1)$.

For a closed subset $\bTheta$ of $\RR^p$, let $\cC^k(\bTheta)$ denote the collection of all functions on $\bTheta$ with continuous $k^{{\rm th}}$ Fr\'echet derivative on the interior of $\bTheta$. In particular, $\cC^0(\bTheta)$ consists of all continuous functions on $\bTheta$.

\subsection{Organization of our paper}
The rest of the paper is organized as follows. The assumptions are listed in Section \ref{subsec:assumptions}, followed by discussions on them in Section \ref{subsec:discussion_assumptions}. The main theorem is stated in Section \ref{ssec:mainthm}. In Section \ref{sec:proofs} we introduce the main challenges and proof techniques (Section \ref{subsec:challenge}) and a proof sketch of the main theorem (Section \ref{subsec:proof_main}). The concluding remarks are in Section \ref{subsec:conclusion}.  Detailed proofs of the lemmas are postponed to the Appendix.

\section{Main result}\label{sec:main}
\subsection{Assumptions}\label{subsec:assumptions}
As we discussed in Section \ref{sec:prblm-state} our goal is to prove the accuracy of LO under the high-dimensional settings. Before we state our main theorem we explain some of the assumptions we have made and discuss their validity. 

\begin{enumerate}
    \item[A1] $\bTheta \subset \RR^p$ is a closed convex set.

    \item[A2] Both $n$ and $p$ are large, while $n/p\equiv\gamma_0\in (0,\infty)$.

    \item[A3] $\bX = (\bx_1,\cdots, \bx_n)^\top$ where $\bx_i \in \mathbb{R}^p$ are i.i.d. $N(0,\bSigma)$ samples. Moreover, there exist constants $0<c_{X}\leq C_{X}$ such that $p^{-1} c_X \leq \sigma_{\min}(\bSigma)\leq \sigma_{\max}(\bSigma)\leq p^{-1}C_X$.

    \item[A4] $r(\bbeta)=(1-\eta)r_0(\bbeta) + \eta\bbeta^\top\bbeta$ where $\eta \in (0,1)$ and $r_0$ is non-negative, convex and Lipschitz continuous. 

    \item[A5] Assume that $\ell(y,z)$ and $\phi(y,z)$ are non-negative, convex and continuously differentiable with respect to $z$ and that $\ell$, $\dl$, $\phi$ and $\dphi$ (defined in Section \ref{subsec:notations}) grow polynomially in $y, z$, i.e., there exist constants $s$ and $C>0$ such that 
    \begin{align*}
        &\max\{|\ell(y,z)|, |\dl(y,z)|,|\phi(y,z)|, |\dphi(y,z)|
        \}\\
        \leq& C(1+|y|^s+ |z|^s)
    \end{align*}
    for all $(y,z)$. Furthermore, assume that all moments of $|y_i|$ are bounded, i.e. $\forall m>0$ , $\exists$ constant $C_Y(m)$ depending only on $m$, such that $\forall n\geq 1, \forall i \in [n]$:
    \[
        \EE |y_i|^m \leq C_Y(m).
    \]
\end{enumerate}

\subsection{Discussion of Assumptions}\label{subsec:discussion_assumptions}

The goal of this section is to provide further intuition on the assumptions we introduced in the previous section. 

Assumption A1 is standard in convex optimization. Assumption A2 has become one of the standard frameworks for studying high-dimensional problems \citep{obuchi2019cross, MMB15, xu2021consistent, miolane2021distribution, donoho2009message, donoho2011noise, bu2019, patil2021, patil2022, jalali2016, bellec2023out, celentano2023lasso, li2022non, liang2022precise, fan2022approximate, dudeja2022universality, dudeja2023universality}, since it has been able to prove some of the peculiar features estimators exhibit in high-dimensional settings, such as phase transitions \citep{WeMaZh2018}.   

The  Gaussianity of $\bx_i$ in Assumption A3 is prevalent in theoretical papers dealing with high-dimensional problems. While it is straightforward to relax this assumption to sub-Gaussianity, we focus on Gaussian distribution to keep the notations clear and simple. The scaling we have adopted here, i.e.,
\[
p^{-1} c_X \leq \sigma_{\min}(\bSigma)\leq \sigma_{\max}(\bSigma)\leq p^{-1}C_X,
\]
 is based on the following rationale. First note that 
\[
\frac{c_X}{p} \|\bbeta^*\|_2^2 \leq \mathbb{E} (\bx_i^\top \bbeta^*)^2 \leq \frac{C_X}{p} \|\bbeta^*\|_2^2. 
\]
When $n, p \rightarrow \infty$ with $n/p \equiv \gamma_0$ and when the elements of $\bbeta^*$ are $O(1)$,  we have $\|\bbeta^*\|=O(\sqrt{p})$, and hence $\mathbb{E} (\bx_i^\top \bbeta^*)^2 = O(1)$. Therefore, under the settings of the paper we can see that the signal-to-noise ratio (SNR) of each data point, defined as $\frac{\var(\bx_i^\top\bbeta^*)}{\var(y_i|\bx_i^\top\bbeta^*)}$, remains bounded. The reader may check Appendix \ref{subsec:bd_snr} for details. Intuitively speaking, if SNR per datapoint is very large, the estimation problem will be easy and the problem of risk estimation is not of particular interest; maximum likelihood estimators perform very well. On the other hand, if SNR per data point is very small, the accurate estimation of $\hbbeta$ will not be possible unless we impose conditions on $\bbeta^*$, such as stringent sparsity assumption. However, since we want our results to be generic in terms of $\bbeta^*$, we do not want to impose any stringent constraint on $\bbeta^*$.

Another way we can justify the scaling in Assumption A3 is that under this scaling, neither the loss $\sum_j \ell(y_j,\bx_j^\top\bbeta^*)$ nor the regularizer $\lambda r(\bbeta^*)$ dominate each other when $n,p\to \infty$. In other words, for a large class of losses $\sum_j \ell(y_j,\bx_j^\top\bbeta^*)= O_p(n)$ and for a large class of regularizers $\lambda r(\bbeta^*)= O_p(p)$. Given that $n/p=\gamma_0$, the two terms have the same order. Effectively, this makes the optimal choice of $\lambda$ (that gives the minimum out-of-sample error), $O_p(1)$. For more precise arguments behind this, we refer the reader to \citep{MMB15, WaWeMa20, wang2022does}.

Assumptions A4 guarantees the loss function to be $2\lambda\eta$-strongly convex.  Researchers have noticed that in wide range of applications, adding $\eta \bbeta^{\top} \bbeta$ in addition to the non-differentiable regularizer often improves the prediction performance \citep{hastie2017extended, mazumder2023subset, wang2022does, guo2023signal}.

Assumption A4 also posits the Lipschitz continuity of $r_0$. Nearly all popular non-differentiable regularizers satisfy this condition. Appendix \ref{subsec:assumption_a4} presents several examples including LASSO, generalized LASSO and Schatten norms with the nuclear norm as a special case.

Assumption A5 also holds for a wide range of models that are used in practice. For instance Appendix \ref{subsec:assumption_a5} considers standard data generation mechanisms that are used in linear regression, logistic regression and Poisson regression, and show that for all those models Assumption A5 holds.

\subsection{Main theorem}\label{ssec:mainthm}
The following theorem and its discussion are the main contributions of this paper:

\begin{theorem}\label{thm:main}
    Under Assumption A1-A5, there exists a constant $C_v$ depending only on $\{\gamma_0,\lambda,\eta, s, C_X, C_Y(\cdot)\}$ such that
        \begin{align*}
            \EE ( \LO - \OO )^2
             &\leq \frac{C_v}{n}.
        \end{align*}
  \end{theorem}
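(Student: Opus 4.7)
The plan is to split
\[
\EE(\LO-\OO)^2 \;\leq\; 3\bigl(\EE[\LO-\OO]\bigr)^2 + 3\Var(\LO) + 3\Var(\OO),
\]
and to show that each of the three summands is $O(1/n)$. The workhorse throughout is the $2\lambda\eta$-strong convexity of the objective in \eqref{eq:opt:prob} guaranteed by the ridge component $\eta\bbeta^\top\bbeta$ of $r$ in A4. Together with the $p^{-1}$-scaling of $\bSigma$ in A3 and the polynomial growth/moment bounds in A5, this will yield the basic displacement estimates $\EE\|\hbbeta-\hbbeta_{/i}\|^2=O(1)$ and, writing $\hbbeta'$ and $\hbbeta_{/j}'$ for the fits on the dataset in which $(y_i,\bx_i)$ has been replaced by an independent copy $(y_i',\bx_i')$, also $\EE\|\hbbeta_{/j}-\hbbeta_{/j}'\|^2=O(1)$. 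Crucially, these bounds come from adding the two optimality inequalities and never touch a derivative of $r_0$; that is the central idea that lets the argument bypass the restrictions in \citep{rad2020error}.

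\textbf{Bias.} For the squared-bias term, I would use iid exchangeability to write $\EE[\LO]=\EE[\phi(y_0,\bx_0^\top\hbbeta^{(n-1)})]$ with $\hbbeta^{(n-1)}$ an R-ERM fit on $n-1$ samples and $(y_0,\bx_0)$ an independent fresh draw, and $\EE[\OO]=\EE[\phi(y_0,\bx_0^\top\hbbeta^{(n)})]$, so the bias is a single step of the learning curve. An integral mean-value expansion in the linear predictor combined with Cauchy--Schwarz bounds $|\EE[\LO]-\EE[\OO]|$ by $\sqrt{\EE[\dphi^2]}\cdot\sqrt{\EE[(\bx_0^\top(\hbbeta^{(n)}-\hbbeta^{(n-1)}))^2]}$. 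Because $\bx_0\sim N(0,\bSigma)$ is independent of the training data, the inner conditional second moment equals $\bdelta^\top\bSigma\bdelta\leq(C_X/p)\|\bdelta\|^2$ with $\bdelta=\hbbeta^{(n)}-\hbbeta^{(n-1)}$, and the $O(1)$ stability of $\bdelta$ then makes this $O(1/n)$; the first factor is $O(1)$ by A5, so the squared bias is $O(1/n)$.

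\textbf{Variance.} For $\Var(\LO)$ the plan is to apply Efron--Stein, $\Var(\LO)\leq\tfrac12\sum_{i=1}^n\EE[(\LO-\LO')^2]$, where $\LO'$ is built from the dataset with $(y_i,\bx_i)$ resampled. The decomposition
\[
\LO-\LO' = \tfrac1n\bigl[\phi(y_i,\bx_i^\top\hbbeta_{/i})-\phi(y_i',(\bx_i')^\top\hbbeta_{/i})\bigr] + \tfrac1n\sum_{j\neq i}\bigl[\phi(y_j,\bx_j^\top\hbbeta_{/j})-\phi(y_j,\bx_j^\top\hbbeta_{/j}')\bigr]
\]
separates a diagonal term of mean square $O(1/n^2)$ (from A5 and moment bounds) from an off-diagonal sum. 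For each off-diagonal summand, the displacement $\hbbeta_{/j}-\hbbeta_{/j}'$ is independent of $\bx_j$ because both estimators exclude observation $j$, so conditionally $\bx_j^\top(\hbbeta_{/j}-\hbbeta_{/j}')$ is centered Gaussian with variance $\leq(C_X/p)\|\hbbeta_{/j}-\hbbeta_{/j}'\|^2$; combined with the $O(1)$ norm-stability and a standard second-moment calculation for the sum over $j\neq i$, this yields $\EE[(\LO-\LO')^2]=O(1/n^2)$, and summation in $i$ delivers $\Var(\LO)=O(1/n)$. A parallel Efron--Stein argument handles $\Var(\OO)$, since $\OO$ is a smooth functional of $\hbbeta$ through the expectation over the independent Gaussian test point.

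\textbf{Main obstacle.} The hard part, and the point where the argument must depart from \citep{rad2020error}, is that the implicit function theorem used there to differentiate $\hbbeta$ in the data is unavailable for non-differentiable $r_0$ or for a proper convex constraint $\bTheta\neq\RR^p$. The plan is to replace every differential identity by a variational inequality, the prototype being
\[
\ell_i(\hbbeta_{/i})-\ell_i(\hbbeta) \;\geq\; \lambda\eta\|\hbbeta-\hbbeta_{/i}\|^2,
\]
obtained by adding the optimality conditions for $\hbbeta$ and $\hbbeta_{/i}$ under $2\lambda\eta$-strong convexity; the subgradient of $r_0$ and the normal cone of $\bTheta$ both cancel in this addition by monotonicity, so Lipschitz continuity of $r_0$ in A4 together with closed convexity of $\bTheta$ in A1 is all that is used. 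The most delicate step I anticipate is extracting the improving $1/p\sim 1/n$ factor from Gaussianity of $\bx_j$ in the off-diagonal Efron--Stein sum: the naive $\ell_2$ norm-stability alone would only give $\Var(\LO)=O(1)$, and the refinement relies in an essential way on the independence of $\bx_j$ from $\hbbeta_{/j}$ together with the $p^{-1}$ spectral scaling imposed by A3.
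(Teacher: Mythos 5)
Your stability and bias steps are sound, and in fact your variational-inequality derivation of $\norm{\hbbeta-\hbbeta_{/i}}\leq |\dl_i(\hbbeta_{/i})|\,\norm{\bx_i}/(2\lambda\eta)$ (adding the two optimality inequalities so that the subgradient of $r_0$ and the normal cone of $\bTheta$ cancel by monotonicity) is a cleaner route to the paper's Lemma \ref{lem:beta_lo_error} than the smoothing-plus-projection argument used there. The bias bound likewise mirrors the paper's treatment of $V_2$ and goes through.

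The genuine gap is in the variance step, and it sits exactly where you flag the "standard second-moment calculation." Write $\Delta_j:=\phi_j(\hbbeta_{/j})-\phi_j(\hbbeta_{/j}')$ for the off-diagonal increments in $\LO-\LO'$. Your argument correctly gives $\EE[\Delta_j^2]=O(1/n)$ (independence of $\bx_j$ from $\hbbeta_{/j}-\hbbeta_{/j}'$ plus the $p^{-1}$ spectral scaling), so the diagonal of $\EE\bigl[(\tfrac1n\sum_{j\neq i}\Delta_j)^2\bigr]$ is $O(1/n^2)$. But the $n^2$ cross terms $\EE[\Delta_j\Delta_k]$, $j\neq k$, are each only $O(1/n)$ by Cauchy--Schwarz, which yields $\EE[(\LO-\LO')^2]=O(1/n)$ and hence, after summing over $i$, $\Var(\LO)=O(1)$ --- not $O(1/n)$. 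To rescue Efron--Stein you would need $\EE[\Delta_j\Delta_k]=O(1/n^2)$, a second-order decorrelation statement that none of your listed tools provides; note that $\bx_j$ is \emph{not} independent of $\hbbeta_{/k}-\hbbeta_{/k}'$, so the conditioning trick that works on the diagonal does not kill these terms. The same problem afflicts $\Var(\OO)$: first-order Efron--Stein with $\norm{\hbbeta-\hbbeta'}=O(1)$ gives $\EE[(\OO-\OO')^2]=O(1/p)$ per coordinate and hence only $\Var(\OO)=O(1)$. More fundamentally, the three-term decomposition discards the strong correlation between $\LO$ and $\OO$ that makes the theorem true at rate $1/n$; the paper never needs $\Var(\OO)=O(1/n)$ at all. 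Instead it centers each summand by $\EE[\phi_i(\hbbeta_{/i})\mid\cD_{/i}]$ (the $V_1/V_2$ split), pairs $\OO$ with these conditional expectations so that only differences $\hbbeta-\hbbeta_{/i}$ appear in $V_2$, and handles the $V_1$ cross terms by introducing the leave-two-out estimator $\hbbeta_{/12}$: three of the four resulting terms vanish exactly by conditional independence given $\cD_{/12}$, and the surviving term $D_1$ only needs to be $O(1/n)$, not $O(1/n^2)$. Some analogue of that conditional-centering and leave-two-out argument is what your proposal is missing.
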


Before we present the sketch of the proof, we would like to discuss this theorem and provide some intuition.\\

\begin{remark}
A simple conclusion of this theorem is that under the asymptotic setting in which $n,p \rightarrow \infty$ and the ratio $n/p$ remains fixed, and $p^{-1} c_X \leq \sigma_{\min}(\bSigma)\leq \sigma_{\max}(\bSigma)\leq p^{-1}C_X$, LO offers a consistent estimate of the out-of-sample prediction error in the sense that $\LO \rightarrow \OO$ in probability. However, note that Theorem~\ref{thm:main} offers more than the consistency, and it captures the convergence rate as well.\\
\end{remark}

\begin{remark}
The rate $\frac{1}{n}$ obtained in Theorem \ref{thm:main} is expected to be sharp. Note that the leave-one-out cross-validation takes an average of $n$ estimates of the OO, namely $\phi(y_i; \bx_i^{\top} \hbbeta_{/ i})$. The variance of each of these estimates is $O(1)$. Hence, if all these estimates were independent, the variance of LO would be still proportional to $O(1/n)$, which is the same as the bound we have obtained. \\
\end{remark}

\begin{remark}
The rate $O(1/n)$ has also been seen in the previous work on the analysis of LO under the low-dimensional asymptotic, where $p$ is fixed, while $n \rightarrow  \infty$. For instance, \citep{burman88} provided a rate estimate of the variance of $\LO$ (Theorem 6.2(c) in \citep{burman88}, notations modified) and showed:
    \begin{align*}
        &\var(\LO - \OO) = O(1/n).
    \end{align*}
    Note that $\var(\LO- \OO)\leq \EE(\LO- \OO)^2$. 
    Our result show that the rate is still $O(1/n)$ in high-dimensional settings. More recently, the paper \citep{austern2020asymptotics} has characterized the limiting distribution of the $k$-fold cross-validation under the low-dimensional setting. In order to obtain a non-degenerate limiting distribution for LO, \citep{austern2020asymptotics} has to scale $\LO- \OO$ with $\sqrt{n}$. This scaling translates to the same scaling as the one in \citep{burman88}. 
\end{remark}


\section{Proofs}\label{sec:proofs}

\subsection{Main challenges and novel techniques} \label{subsec:challenge}

As mentioned earlier, \citep{rad2020error} established a result akin to Theorem \ref{thm:main} in the case where the regularizer is twice differentiable, and there is no constraint, i.e. $\bTheta = \mathbb{R}^p$. 

In our proof, there are two new challenges: the non-smoothness of the regularizer $r$, and the existence of the convex constraint $\bTheta$. We introduce two novel elements that allowed us to significantly broaden the scope of Theorem \ref{thm:main} well beyond what was offered by \citep{rad2020error}: smoothing and projection.

\begin{enumerate}
\item Smoothing: We start with approximating the non-smooth regularizer, $r_0(\bbeta)$ with a smooth function: 
\[r_0^\alpha(\bbeta) = \int_{\bTheta}r_0(\bbeta-\bz)\alpha\phi(\alpha\bz)d\bz \numberthis\label{eq:gaussian_ralpha}\]

where $\phi(\bz) = (2\pi)^{-p/2}e^{-\frac12\bz^\top\bz}$ is the density of a standard Gaussian vector.\footnote{Note that when $\bbeta$ is a $p_1$ by $p_2$ matrix, we concatenate its rows to form a vector of dimension $p=p_1\times p_2$.} The following lemma shows one of the properties of this approximation that will be used throughout the paper. 

\begin{lemma}\label{lem:gaussian_smooth}
    Suppose $\bTheta$ is closed and  $r\in \cC^0(\bTheta)$. 
    \begin{enumerate}
        \item Suppose there exists a positive integer $K$ such that $r(\bz) \norm{\bz}_2^k e^{-\frac12 \norm{\bbeta-\bz}_2^2}$ is integrable. Then $r^\alpha$ defined in \eqref{eq:gaussian_ralpha} is in $\cC^k(\bTheta_0)$ where $\bTheta_0$ is the interior of $\bTheta$.
        \item Suppose there exist constants $L>0$ and $k\in (0,1]$ such that for all $\bx,\by\in\bTheta$,
        \[
            |r(\bx) - r(\by)|\leq L \norm{\bx-\by}_2^k.
        \]
        Then as $\alpha\to\infty$, we have
        \[
            \norm{r^\alpha - r}_\infty \to 0. 
        \]
    \end{enumerate}        
\end{lemma}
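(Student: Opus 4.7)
The plan is to view $r^\alpha$ as the Gaussian mollification of $r$, namely $r^\alpha(\bbeta)=\EE[r(\bbeta-\alpha^{-1}\bZ)]$ with $\bZ\sim N(\mathbf{0},\bI_p)$, or equivalently, after a change of variables, as the convolution of $r$ against the density of $N(\mathbf{0},\alpha^{-2}\bI_p)$:
\[
r^\alpha(\bbeta)=\int r(\bu)\,(2\pi)^{-p/2}\alpha^p e^{-\frac{\alpha^2}{2}\norm{\bbeta-\bu}_2^2}\,d\bu.
\]
Both parts then reduce to standard facts about convolutional smoothing, with the growth hypothesis tracked carefully in Part 1 and the H\"older hypothesis used quantitatively in Part 2.

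For Part 1, I would differentiate under the integral sign iteratively. All of the $\bbeta$-dependence sits inside the Gaussian kernel, and each derivative of that kernel with respect to $\bbeta$ produces a Hermite-polynomial factor in $(\bbeta-\bu)$ of matching order times the same Gaussian. Taking $k$ derivatives therefore gives an integrand bounded in absolute value by $C_k\, r(\bu)\,\norm{\bu-\bbeta}_2^k\,e^{-\frac{\alpha^2}{2}\norm{\bbeta-\bu}_2^2}$. On any small neighborhood of an interior point $\bbeta_0\in\bTheta_0$, this is dominated by a constant multiple of the integrable envelope postulated in the hypothesis (after widening the Gaussian scale slightly to absorb the shift from $\bbeta$ to $\bbeta_0$, and dumping polynomial factors of $\norm{\bbeta_0}$ into $C_k$). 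The dominated convergence theorem then justifies passing every derivative up to order $k$ inside the integral, and a second application of dominated convergence yields continuity of those derivatives in $\bbeta$, placing $r^\alpha\in\cC^k(\bTheta_0)$.

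For Part 2, I would use that the Gaussian density integrates to one and write
\[
r^\alpha(\bbeta)-r(\bbeta)=\int\bigl[r(\bbeta-\bz)-r(\bbeta)\bigr]\,(2\pi)^{-p/2}\alpha^p e^{-\frac{\alpha^2}{2}\norm{\bz}_2^2}\,d\bz.
\]
Applying the H\"older bound $|r(\bbeta-\bz)-r(\bbeta)|\leq L\norm{\bz}_2^k$ and substituting $\bu=\alpha\bz$ gives
\[
|r^\alpha(\bbeta)-r(\bbeta)|\;\leq\;L\alpha^{-k}\int\norm{\bu}_2^k(2\pi)^{-p/2}e^{-\norm{\bu}_2^2/2}\,d\bu\;=\;L\,\alpha^{-k}\,\EE\norm{\bZ}_2^k,
\]
an upper bound independent of $\bbeta$ that tends to zero as $\alpha\to\infty$, yielding $\norm{r^\alpha-r}_\infty\to 0$.

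The main obstacle, such as it is, is only bookkeeping: extracting from the pointwise integrability hypothesis in Part 1 a genuine uniform dominating function on a neighborhood of each interior point, which amounts to comparing two Gaussian weights at nearby centers and is a routine calculation. A minor notational subtlety is reconciling the integration domain $\bTheta$ in \eqref{eq:gaussian_ralpha} with the $\RR^p$-convolution viewpoint above; the cleanest resolution is to interpret $r$ as continuously extended to all of $\RR^p$ (for instance using the same H\"older modulus in Part 2), after which the limits of integration no longer interact with $\bbeta$ and the mollification argument proceeds as sketched.
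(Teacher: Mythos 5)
Your proposal is correct and follows essentially the same route as the paper: differentiation under the integral sign justified by dominated convergence (using that derivatives of the Gaussian kernel are polynomial-times-Gaussian) for Part 1, and the change of variables $\bu=\alpha\bz$ together with the H\"older modulus for Part 2. The only cosmetic difference is that the paper extends $r$ to $\RR^p$ by zero rather than by a H\"older-continuous extension, which does not affect the argument.
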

The proof of this lemma is presented in Appendix \ref{subsec: approx_r}. Note that all assumptions in Lemma \ref{lem:gaussian_smooth} are satisfied when $r$ is Lipschitz continuous as in Assumption A4.
This lemma implies that $r_0^{\alpha} (\bbeta)$ is an accurate approximation of $r_0(\bbeta)$ for all values of $\bbeta$. Define
\begin{align*}
    \hbbeta^\alpha &:= \underset{\bbeta\in\bTheta}{\argmin} \sum_{j=1}^n \ell_j(\bbeta) + \lambda (1-\eta) r_0^{\alpha}(\bbeta) + \lambda \eta \bbeta^\top\bbeta\\
    \hbbeta^\alpha_{/i} &:= \underset{\bbeta\in\bTheta}{\argmin} \sum_{j \neq i} \ell_j(\bbeta) + \lambda (1-\eta) r_0^{\alpha}(\bbeta) + \lambda \eta \bbeta^\top\bbeta.
\end{align*}
 Our final goal is to show that by analyzing the $\LO$ for the surrogate estimates $\hbbeta_{/ i}^\alpha$ and $\hbbeta^\alpha$ we can also analyze the $\LO$ for the original estimates $\hbbeta_{/ i}$ and $\hbbeta$. This will be clarified in Section \ref{subsec:proof_main}.

\item Projection operator: The second challenge we have to address in proving Theorem \ref{thm:main} is the existence of the constraint set $\bTheta$. One can write the optimization problem $\hbbeta=\underset{\bbeta\in \bTheta}{\argmin}\sum_{i=1}^n\ell_i(\bbeta)+\lambda r(\bbeta)$ as
\begin{eqnarray}
\hbbeta=\underset{\bbeta}{\argmin}\sum_{i=1}^n\ell_i(\bbeta)+\lambda r(\bbeta) + {\bI}_{\bTheta} (\bbeta), \nonumber
\end{eqnarray}
where ${\bI}_{\bTheta} (\bbeta)$ is the convex indicator function of the set $\bTheta$, and treat the constraint as another non-differentiable regularizer and use smoothing again. However,  indicator function ${\bI}_{\bTheta} (\bbeta)$ is not a Lipschitz function and cannot be uniformly approximated by smooth functions. Hence the smoothing argument discussed before will not work. Hence, we pick a different approach and represent $\hbbeta^\alpha$ in a different way. The following lemma provides this alternative representation:
\begin{lemma}\label{lem:fix_eq_prox}
    Suppose 
    \begin{itemize}
        \item $R:\RR^p \to \RR\cup \{-\infty,\infty\}$ is proper convex with $dom(R)$ being closed.
        \item $L:\RR^p\to \RR$ is differentiable on the relative interior of $dom(R)$ and proper convex on $dom(R)$.
    \end{itemize} 
    Define the proximal operator of the function $R$ as
    \[
    \prox_R(\mathbf{u}) := \argmin_{\bx}\left\{ R(\bx)+ \frac{1}{2} \|\mathbf{u}-\bx\|_2^2\right\}. 
    \]
    Then a solution of the following equation:
    \[
        \bx = \prox_R(\bx-\nabla L(\bx))
        \label{eq:fix_eq_prox}\numberthis
    \]    
    is also a minimizer of the problem
    \[
        \min_{\bx} \left\{L(\bx) + R(\bx)\right\}
        \label{eq:min_L_R}\numberthis
    \]
    and vice versa.
\end{lemma}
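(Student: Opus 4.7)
The plan is to reduce both the fixed-point equation \eqref{eq:fix_eq_prox} and the optimization problem \eqref{eq:min_L_R} to the same subdifferential inclusion, namely $0 \in \nabla L(\bx) + \partial R(\bx)$, and then invoke sufficiency of first-order conditions under convexity. The single analytical ingredient I need is the standard proximal-characterization lemma: for any $\bu \in \RR^p$,
\[
    \bx^\star = \prox_R(\bu) \iff \bu - \bx^\star \in \partial R(\bx^\star),
\]
which itself follows by writing the Fermat optimality condition for the strongly convex problem $\min_{\bx}\{R(\bx) + \tfrac12\|\bu - \bx\|_2^2\}$ defining $\prox_R$. Because $R$ is proper convex, this minimizer exists and is unique, and the equivalence is an equality of subgradient sets.

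For the forward direction, I would assume $\bx$ satisfies $\bx=\prox_R(\bx-\nabla L(\bx))$. Plugging $\bu = \bx - \nabla L(\bx)$ into the proximal characterization immediately gives $-\nabla L(\bx) \in \partial R(\bx)$, i.e.\ $0 \in \nabla L(\bx)+\partial R(\bx)$. Since $L$ and $R$ are convex and $L$ is differentiable at $\bx$ (we are in the relative interior of $dom(R)$ because $\prox_R$ maps into $dom(R)$ and, under the stated hypotheses, $L$ is differentiable there), the sum $L+R$ is convex, and this subgradient inclusion is a sufficient first-order condition. Hence $\bx$ solves \eqref{eq:min_L_R}.

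For the converse, suppose $\bx$ is a minimizer of $L+R$. By Fermat's rule $0 \in \partial(L+R)(\bx)$, and applying the sum rule $\partial(L+R)(\bx) = \nabla L(\bx)+\partial R(\bx)$, valid because $L$ is differentiable at $\bx$, we again get $-\nabla L(\bx)\in \partial R(\bx)$. Setting $\bu = \bx - \nabla L(\bx)$ this reads $\bu - \bx \in \partial R(\bx)$, and the proximal characterization now gives $\bx = \prox_R(\bu) = \prox_R(\bx - \nabla L(\bx))$, which is exactly \eqref{eq:fix_eq_prox}.

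The main obstacle is purely domain-bookkeeping rather than substantive: I must verify that both a fixed point of \eqref{eq:fix_eq_prox} and a minimizer of \eqref{eq:min_L_R} lie in the relative interior of $dom(R)$ where $\nabla L$ is defined, and that the subdifferential sum rule applies without requiring constraint qualification beyond differentiability of $L$. Both are standard: $\prox_R$ maps automatically into $dom(R)$; and under the stated hypothesis that $L$ is differentiable on the relative interior of $dom(R)$, differentiability at an interior point is enough for the sum rule (Rockafellar, Convex Analysis, Theorem 23.8). Once these points are checked, the chain of equivalences above is immediate and does not require any specific structure of $R$ beyond proper convexity.
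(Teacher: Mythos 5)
Your argument is correct and is exactly the standard subdifferential argument underlying the result; the paper itself offers no proof here, simply citing Proposition 3.1(iii)(b) of Combettes and Wajs (2005), whose proof proceeds the same way (prox characterization $\bu-\bx^\star\in\partial R(\bx^\star)$, reduction of both conditions to $0\in\nabla L(\bx)+\partial R(\bx)$, then sufficiency by convexity). The only point worth making explicit is that well-definedness of $\prox_R$ also uses lower semicontinuity of $R$ (automatic for the indicator of the closed convex set $\bTheta$, which is the only case the paper needs), and that, as you note, the fixed-point equation is only meaningful at points where $\nabla L$ exists, which holds throughout in the paper's application.
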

\begin{proof}
    See Propositon 3.1 (iii)(b) in \citep{cw05proximity}. 
\end{proof}
Using this lemma we find a new representation for $\hbbeta$. Consider the problem with a convex constraint: 
    \[
        \hbbeta = \underset{\bbeta\in \bTheta}{\argmin} \sum_i \ell_i(\bbeta) + \lambda r(\bbeta).
    \]
    where $\ell$ and $r$ are smooth, $\bTheta$ is a closed convex set. Using Lemma \ref{lem:fix_eq_prox} with $L(\bbeta)=\sum_i \ell_i(\bbeta) + \lambda r(\bbeta)$ and $R(\bbeta) = \bI_{\bTheta}(\bbeta)$ where $\bI_{\bTheta}(\bbeta)$ is the convex indicator function\footnote{Convex indicator function of set $\bTheta$ is defined as: $\bI_{\bTheta}(\bbeta)= 0$ if $\bbeta\in \bTheta$ and $\bI_{\bTheta}(\bbeta)=+\infty$ otherwise}, we have that $\hbbeta$ should satisfy
    \begin{align*}
        \hbbeta &= \prox_{\bI_{\bTheta}}(\hbbeta - \sum_i \dl_i(\hbbeta)\bx_i - \lambda \nabla r(\hbbeta))\\
        &= \bPi_{\bTheta} (\hbbeta - \sum_i \dl_i(\hbbeta)\bx_i - \lambda \nabla r(\hbbeta))\label{eq:fix_eq_proj},\numberthis
    \end{align*}
    where $\bPi_{\bTheta}$ is the metric projection operator
    \[
        \bPi_{\bTheta}(\bx):=\underset{\bz\in \bTheta }{\argmin}\|\bx-\bz\|_2.
    \]
Note that to obtain the last equality in \eqref{eq:fix_eq_proj}, we have used the fact that the proximal operator of $\bI_{\bTheta}$ is the same as the metric projection  onto set $\bTheta$.  

Using \eqref{eq:fix_eq_proj}, instead of representing $\hbbeta^\alpha$ as the minimizer of $\sum_{i=1}^n\ell_i(\bbeta)+\lambda r^{\alpha}(\bbeta) + {\bI}_{\bTheta} (\bbeta)$, we represent it as the solution of the following fix point equation: 
\begin{align*}
		\hbbeta^\alpha = \bPi_{\bTheta}(\hbbeta^\alpha-\nabla h^{\alpha}(\hbbeta^\alpha))
\end{align*}
where $\bPi_{\bTheta} (\cdot)$ denotes the metric projection onto $\bTheta$. Similarly,
$\hbbeta_{/i}^\alpha$ satisfies:
\begin{align*}
		\hbbeta_{/i}^\alpha = \bPi_{\bTheta}(\hbbeta_{/i}^\alpha-\nabla h_{/i}^{\alpha}(\hbbeta_{/i}^\alpha)),
	\end{align*}
The following lemma establishes some of the important properties of the projection operators that are particularly useful in our paper:

\begin{lemma}\label{lem:proj_property}
    Suppose $\bTheta$ is a closed convex set in $\RR^p$, and $\bx, \by\in \RR^p$. Let $\bPi_{\bTheta}$ be the metric projection onto $\bTheta$. Then there exists a matrix-valued function $\bJ(t)$ with 
    \[
        0\leq \lambda_{\min}(\bJ(t))\leq \lambda_{\max}(\bJ(t))\leq 1,\quad \forall t\in [0,1]
    \]
    such that
    \[
        \bPi_{\bTheta}(\by) - \bPi_{\bTheta}(\bx) = \int_0^1 \bJ(t)dt(\by-\bx).
    \]
\end{lemma}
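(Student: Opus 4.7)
The strategy is to represent $\bPi_\bTheta$ as the gradient of a convex function with Lipschitz gradient, apply the Hessian--integral identity after mollification, and pass to a weak-$\ast$ limit.

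\emph{Step 1: Gradient representation.} I would exhibit a convex $f:\RR^p\to\RR$ with $1$-Lipschitz gradient such that $\nabla f=\bPi_\bTheta$. The natural candidate is
\[
    f(\bx) := \sup_{\bz\in\bTheta}\Bigl(\langle\bx,\bz\rangle - \tfrac{1}{2}\|\bz\|_2^2\Bigr) = \tfrac{1}{2}\|\bx\|_2^2 - \tfrac{1}{2}\,\mathrm{dist}(\bx,\bTheta)^2,
\]
which is convex as a pointwise supremum of affine-plus-constant functions. Because the supremum is attained uniquely at $\bz=\bPi_\bTheta(\bx)$, a Danskin/envelope argument (or direct differentiation of $\mathrm{dist}(\cdot,\bTheta)^2$) gives $\nabla f(\bx)=\bPi_\bTheta(\bx)$; the classical non-expansiveness of the projection then shows $\nabla f$ is $1$-Lipschitz.

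\emph{Step 2: Mollification.} Let $\rho_\epsilon$ be a smooth, nonnegative, compactly supported mollifier with $\int\rho_\epsilon=1$, and set $f_\epsilon:=f*\rho_\epsilon$. Convolution with a probability density preserves convexity, so $f_\epsilon\in C^\infty$ is convex; moreover $\nabla f_\epsilon=\bPi_\bTheta*\rho_\epsilon$ remains $1$-Lipschitz and converges to $\bPi_\bTheta$ uniformly on compact sets as $\epsilon\to 0$. Consequently $\nabla^2 f_\epsilon(\bw)$ is symmetric, positive semidefinite, and has operator norm at most $1$ for every $\bw\in\RR^p$. Setting $\bJ_\epsilon(t):=\nabla^2 f_\epsilon\bigl(\bx+t(\by-\bx)\bigr)$ for $t\in[0,1]$, the fundamental theorem of calculus gives
\[
    \nabla f_\epsilon(\by)-\nabla f_\epsilon(\bx)=\left(\int_0^1 \bJ_\epsilon(t)\,dt\right)(\by-\bx).
\]

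\emph{Step 3: Weak-$\ast$ limit.} The family $\{\bJ_\epsilon\}_{\epsilon>0}$ is norm-bounded in $L^\infty([0,1];\RR^{p\times p})$, so Banach--Alaoglu yields a subsequence $\bJ_{\epsilon_k}$ converging weak-$\ast$ to some $\bJ$. Testing each entry against the constant function $1\in L^1([0,1])$ gives $\int_0^1 \bJ_{\epsilon_k}(t)\,dt\to\int_0^1 \bJ(t)\,dt$, and the left-hand side converges to $\bPi_\bTheta(\by)-\bPi_\bTheta(\bx)$ by uniform convergence of $\nabla f_\epsilon$ on the segment $[\bx,\by]$. Symmetry, positive semidefiniteness, and $\lambda_{\max}(\bJ(t))\leq 1$ for a.e.\ $t$ are inherited from the approximants by pairing $\bJ_{\epsilon_k}$ against $h(t)\bv\bv^\top$ for nonnegative $h\in L^1([0,1])$ and $\bv$ ranging over a countable dense subset of $\RR^p$; redefining $\bJ$ on a Lebesgue-null set (e.g., to the zero matrix) extends the spectral bounds to every $t\in[0,1]$. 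I expect this final limiting step to be the main obstacle, since the pointwise spectral constraints must be transferred to the weak-$\ast$ limit through a lower-semicontinuity argument with careful null-set bookkeeping.
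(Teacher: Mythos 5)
Your proof is correct, but it takes a genuinely different route from the paper's. The paper parametrizes the image of the segment, $f(t):=\bPi_{\bTheta}((1-t)\bx+t\by)$, uses firm non-expansiveness of the projection to show $f$ is Lipschitz (hence differentiable a.e.) and to obtain the key inequality $0\leq\|f'(t)\|^2\leq\langle f'(t),\by-\bx\rangle\leq\|\by-\bx\|^2$, and then constructs $\bJ(t)$ pointwise as a scaled rotation taking $\by-\bx$ to $f'(t)$, finishing with the Newton--Leibniz formula for absolutely continuous curves. You instead realize $\bPi_{\bTheta}$ as the ($1$-Lipschitz) gradient of the convex function $\tfrac12\|\cdot\|_2^2-\tfrac12\,\mathrm{dist}(\cdot,\bTheta)^2$, mollify, and pass to a weak-$\ast$ limit of the Hessians along the segment. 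Your route uses heavier machinery (Moreau-envelope differentiability, Banach--Alaoglu, lower semicontinuity of the spectral constraints under weak-$\ast$ convergence), but it buys a strictly stronger conclusion: your $\bJ(t)$ is \emph{symmetric} with $0\preceq\bJ(t)\preceq\bI$, whereas the paper's scaled rotation is not symmetric and in general has complex eigenvalues of modulus $\|f'(t)\|/\|\by-\bx\|$, so the stated bounds $0\leq\lambda_{\min}(\bJ(t))\leq\lambda_{\max}(\bJ(t))\leq 1$ are only literally meaningful as modulus bounds there. Since the downstream use of this lemma (in the proof of Lemma \ref{lem:beta_lo_error}) needs $\bI-\bar{\bJ}$ to be positive semidefinite for $\bar{\bJ}=\int_0^1\bJ(t)\,dt$, the symmetric PSD version you produce is in fact the more convenient form of the statement; the paper's argument is the more elementary and self-contained of the two.
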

For completeness, we include the proof of this well known result in Section \ref{subsec:proof_proj_property}.
\end{enumerate}

In the next section, we explain how using the surrogate estimates $\hbbeta_{/ i}^\alpha$ and $\hbbeta^\alpha$ and representing them as the solution of, e.g. 
\begin{align*}
		\hbbeta_{/i}^\alpha = \bPi_{\bTheta}(\hbbeta_{/i}^\alpha-\nabla h_{/i}^{\alpha}(\hbbeta_{/i}^\alpha)),
	\end{align*}
enable us to obtain an upper bound on the difference $\LO-\OO$. 

\subsection{Proof summary of Theorem \ref{thm:main}}\label{subsec:proof_main}
In this subsection, we present a brief sketch of the proof. Similar to the proof of Theorem 1 in \citep{rad2020error} we use the following two definitions:
\begin{align*}
    V_1 &=  \LO  - \frac{1}{n} \sum_{i\in[n]}\EE[ \phi_i(\hbbeta_{/i})   |\cD_{/i}],      \\
    V_2 &=  \frac{1}{n} \sum_{i\in[n]}\EE[ \phi_i(\hbbeta_{/i})   |\cD_{/i}]  -   \OO,
    \end{align*}
    and use the following upper bound on $\EE \left[ \LO - \OO \right]^2$:
    \begin{align*}
    \EE \left[ \LO - \OO \right]^2
    = \EE \left ( V_1 + V_2  \right )^2 
    \leq 2\EE V_1^2 + 2\EE V_2^2.
    \label{eq:v1+v2}\numberthis
\end{align*}
Hence, the remaining steps are to obtain upper bounds for $\EE V_1^2$ and $\EE V_2^2$. To see how the ideas we introduced in Section \ref{subsec:challenge} enable us to obtain the required upper bound, we provide the details of proving an upper bound for $\EE V_2^2$ (which is shorter) here, and postpone the problem of finding an upper bound for $\EE V_1^2$ to Appendix \ref{sec:pf-lemv1}.

To bound $\EE V_2^2$, first notice that for all $j\neq i$ we have
\begin{align*}
    \EE[ \phi_j(\hbbeta_{/i}) | \cD_{/i}] &= \EE[ \phi_0(\hbbeta_{/i}) |  \cD_{/i}] = \EE[ \phi_0(\hbbeta_{/i})  | \cD].
\end{align*}
By the mean-value theorem, for each $i$ there exists a random variable $\bxi_i=t_i\hbbeta_{/i} + (1-t_i)\hbbeta$ with $t_i \in [0, 1]$ such that
\[
    \phi_0(\hbbeta_{/i}) - \phi_0(\hbbeta) = \dphi_0(\bxi_i) \bx_0^\top (\hbbeta_{/i} - \hbbeta).
\]
Then we have
\begin{align}\label{eq:v2:part1}
  \EE (V_2^2) &= \EE \left( \frac{1}{n} \sum_{i=1}^n   \EE[ \phi_i(\hbbeta_{/i})  | \cD_{/i}]   - \EE [\phi_0( \hbbeta) |  \cD]  \right)^2 \nonumber \\
    &= \EE \left( \frac{1}{n} \sum_{i=1}^n   \EE[ \phi_0( \hbbeta_{/i})| \cD]   - \EE [\phi_0( \hbbeta)  | \cD]  \right)^2 \nonumber
    \\
    &= \frac{1}{n^2}\EE \left( \sum_{i=1}^n  \EE[\dphi_0(\bxi_i) \bx_0^\top (\hbbeta_{/i} - \hbbeta)|   \cD]   \right)^2 
    \nonumber \\
    &= \frac{1}{n^2}\sum_{i=1}^n\sum_{j=1}^n 
    \EE\Big(\EE[\dphi_0(\bxi_i) \bx_0^\top (\hbbeta_{/i} - \hbbeta)|\cD]
    \nonumber \\
    &\hspace{2cm} \cdot
    \EE[\dphi_0(\bxi_j) \bx_0^\top (\hbbeta_{/j} - \hbbeta)|\cD]\Big)\nonumber\\
    &\leq \frac{1}{n^2}\sum_{i=1}^n\sum_{j=1}^n 
    \sqrt{\EE\Big(\EE[\dphi_0(\bxi_i) \bx_0^\top (\hbbeta_{/i} - \hbbeta)|\cD]\Big)^2}
    \nonumber \\
    &\hspace{2cm} \cdot
    \sqrt{\EE\Big(\EE[\dphi_0(\bxi_j) \bx_0^\top (\hbbeta_{/j} - \hbbeta)|\cD]\Big)^2}
    \nonumber\\
    &=\EE\Big(\EE[\dphi_0(\bxi_1) \bx_0^\top (\hbbeta_{/1} - \hbbeta)|\cD]\Big)^2,
\end{align}
where we use Cauchy Schwarz inequality in the penultimate step. Next we have
\begin{align*}
    &\abs{\EE[\dphi_0(\bxi_i) \bx_0^\top (\hbbeta_{/i} - \hbbeta)|\cD]}\\
    \leq& \sqrt{\EE[ \dphi_0^2(\bxi_i) |\cD] \EE[ (\hbbeta_{/i} - \hbbeta)^\top\bx_0\bx_0^\top(\hbbeta_{/i} - \hbbeta)|\cD] }\\
    \leq & \sqrt{\EE[ \dphi_0^2(\bxi_i) |\cD]}\sqrt{\frac{C_X}{p}\|\hbbeta_{/i} - \hbbeta\|^2}\label{eq:v2:part2}\numberthis
\end{align*}
where the last inequality uses the independence between $\bx_0$ and $\cD$ and also the fact that $\sigma_{\max}(\bSigma)\leq \frac{C_X}{p}$ by Assumption A3.

Hence, in order to bound $\mathbb{E} (V_2^2)$, we have to obtain bounds on $\|\hbbeta - \hbbeta_{/i}\|$ and also $\EE[\dphi_0^2(\bxi_i)|\cD]$. That is what the next two lemmas aim to do. The first lemma connects $\hbbeta_{/i}$ and $\hbbeta$:

\begin{lemma}\label{lem:beta_lo_error}
Under assumptions A1-A4, we have for all $\alpha>0$ that
    \begin{align*}
        \norm{\hbbeta - \hbbeta_{/i}}\leq \frac{|\dl_i(\hbbeta_{/i})|\norm{
        \bx_i}}{2\lambda\eta\wedge 1}.
    \end{align*}
\end{lemma}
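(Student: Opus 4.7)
The plan is to exploit the $2\lambda\eta$-strong convexity of both the full and leave-one-out objectives, which is inherited from the ridge component $\lambda\eta\bbeta^\top\bbeta$ in the regularizer (Assumption A4). Writing $F(\bbeta):=\sum_{j=1}^n\ell_j(\bbeta)+\lambda r(\bbeta)$ and $F_{/i}(\bbeta):=F(\bbeta)-\ell_i(\bbeta)$, convexity of each $\ell_j$ (Assumption A5) together with convexity of $r_0$ makes both $F$ and $F_{/i}$ $2\lambda\eta$-strongly convex on the closed convex set $\bTheta$. The standard variational characterization of a strongly convex minimizer over a convex set then yields
\[
    F(\hbbeta_{/i})-F(\hbbeta)\;\geq\;\lambda\eta\,\|\hbbeta-\hbbeta_{/i}\|^2,\qquad
    F_{/i}(\hbbeta)-F_{/i}(\hbbeta_{/i})\;\geq\;\lambda\eta\,\|\hbbeta-\hbbeta_{/i}\|^2,
\]
since $\hbbeta_{/i}\in\bTheta$ is feasible for the first problem and $\hbbeta\in\bTheta$ for the second.

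Adding the two inequalities causes the $F_{/i}$ contributions to cancel and leaves $\ell_i(\hbbeta_{/i})-\ell_i(\hbbeta)\geq 2\lambda\eta\,\|\hbbeta-\hbbeta_{/i}\|^2$. The upper bound on the same difference comes from first-order convexity of $\ell_i$ at $\hbbeta_{/i}$: $\ell_i(\hbbeta)\geq \ell_i(\hbbeta_{/i})+\dot\ell_i(\hbbeta_{/i})\,\bx_i^\top(\hbbeta-\hbbeta_{/i})$, which rearranges and Cauchy--Schwarzs to $\ell_i(\hbbeta_{/i})-\ell_i(\hbbeta)\leq|\dot\ell_i(\hbbeta_{/i})|\,\|\bx_i\|\,\|\hbbeta-\hbbeta_{/i}\|$. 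Dividing by $\|\hbbeta-\hbbeta_{/i}\|$ (the claim is trivial if this quantity vanishes) produces the bound with denominator $2\lambda\eta$, which dominates the stated bound with denominator $2\lambda\eta\wedge1$.

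The only subtlety — and where the "for all $\alpha>0$" phrasing matters — is justifying the variational inequality when $r_0$ is non-smooth and $\bTheta\neq\mathbb{R}^p$. The cleanest way is to run the whole argument with $r_0$ replaced by its Gaussian smoothing $r_0^\alpha$ from \eqref{eq:gaussian_ralpha}; then $F$ and $F_{/i}$ are everywhere differentiable, the $2\lambda\eta$-strong convexity is preserved since it is contributed entirely by the ridge term which is untouched by smoothing, and the variational inequality follows from the first-order KKT condition together with strong convexity in a textbook way. The resulting bound has no $\alpha$-dependence, so passing $\alpha\to\infty$ and invoking Lemma~\ref{lem:gaussian_smooth} (which gives $\hbbeta^\alpha\to\hbbeta$ and $\hbbeta_{/i}^\alpha\to\hbbeta_{/i}$ by continuous dependence of strongly convex minimizers on the objective, and continuity of $\dot\ell_i$) recovers the statement for the original estimators. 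The main obstacle is therefore not analytical difficulty but rather making sure the strong convexity constant $2\lambda\eta$ survives intact through the smoothing and the projection formalism of Section~\ref{subsec:challenge}; no estimate on the closeness of $\hbbeta^\alpha$ to $\hbbeta$ is needed beyond qualitative convergence.
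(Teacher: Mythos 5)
Your proof is correct, and it takes a genuinely different route from the paper's. The paper proves this lemma by passing to the smoothed surrogates $\hbbeta^\alpha,\hbbeta^\alpha_{/i}$, writing each as a fixed point of the projected-gradient map $\bbeta\mapsto\bPi_{\bTheta}(\bbeta-\nabla h^\alpha(\bbeta))$, subtracting the two fixed-point equations, applying mean-value expansions to $\dl_j$ and $\nabla r^\alpha$, and then lower-bounding the smallest singular value of the resulting matrix $\bG=\bI-\bar\bJ+\bar\bJ(\bX_{/i}^\top\diag[\ddl_j]\bX_{/i}+\lambda\nabla^2 r^\alpha)$ by $2\lambda\eta\wedge 1$; it is this last step that produces the $\wedge\,1$ in the denominator. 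Your argument replaces all of that machinery with the two-sided variational inequality for strongly convex constrained minimizers: adding $F(\hbbeta_{/i})-F(\hbbeta)\geq\lambda\eta\|\hbbeta-\hbbeta_{/i}\|^2$ and $F_{/i}(\hbbeta)-F_{/i}(\hbbeta_{/i})\geq\lambda\eta\|\hbbeta-\hbbeta_{/i}\|^2$ isolates $\ell_i(\hbbeta_{/i})-\ell_i(\hbbeta)$, which first-order convexity of $\ell_i$ and Cauchy--Schwarz bound by $|\dl_i(\hbbeta_{/i})|\,\|\bx_i\|\,\|\hbbeta-\hbbeta_{/i}\|$. This is more elementary, bypasses the projection and Jacobian formalism entirely, and in fact yields the strictly better denominator $2\lambda\eta$ rather than $2\lambda\eta\wedge 1$. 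One remark: the smoothing detour you describe in your last paragraph is not actually needed. The inequality $G(y)\geq G(\hat\bx)+\tfrac{\mu}{2}\|y-\hat\bx\|^2$ for a $\mu$-strongly convex $G$ minimized at $\hat\bx$ over a closed convex set follows directly from the definition of strong convexity (take convex combinations $t y+(1-t)\hat\bx$ and let $t\to 0$), with no differentiability of $r_0$ or interiority of $\hbbeta$ required; so your argument applies verbatim to the original non-smooth, constrained objectives. If you do keep the smoothing version, the limit passage is justified exactly as in the paper via Lemma~\ref{lem:beta_smooth_error} together with part (b) of Lemma~\ref{lem:gaussian_smooth} and continuity of $\dl$.
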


The proof of this lemma is presented in Section \ref{subsec:proof_beta_lo_error} and uses the ideas that we mentioned in Section \ref{sec:main}, i.e. smoothing and projection operator.

The next lemma bounds the moments of $\phi_0$ and $\dphi_0$:

\begin{lemma}\label{lem:mean_dphi0}
    Suppose assumptions A1-A5 hold. Then there exists a constant $C_{\phi}$ depending only on $s$ (from Assumption A5) and $C_X$ (from Assumption A3) such that $\forall \bbeta\in \RR^p$:
    \begin{align*}
        \sqrt{\EE[\phi_0^2(\bbeta)]}&\leq C_{\phi} + C_{\phi}\left( \frac1p \|\bbeta\|^2 \right)^{s/2},
        \\
        \sqrt{\EE[\dphi_0^2(\bbeta)]}&\leq C_{\phi} +C_{\phi}\left( \frac1p \|\bbeta\|^2 \right)^{s/2},
        \\
        \sqrt{\EE[\dphi_0^4(\bbeta)]}&\leq C_{\phi}^2 +C_{\phi}^2\left( \frac1p \|\bbeta\|^2 \right)^{s}.
    \end{align*}
\end{lemma}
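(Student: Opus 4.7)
The proof of Lemma \ref{lem:mean_dphi0} is essentially a calculation: we expand the polynomial growth bound from Assumption A5, then separately control the moments of $y_0$ (using the uniform moment bound in A5) and of the linear form $\bx_0^\top\bbeta$ (using Gaussianity and the spectral bound in A3). No smoothing or projection machinery is needed here.

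\medskip

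\textbf{Step 1: polynomial growth bound.} By Assumption A5, for any $\bbeta\in\RR^p$,
\[
\phi_0^2(\bbeta) = \phi(y_0,\bx_0^\top\bbeta)^2 \leq C^2\bigl(1 + |y_0|^s + |\bx_0^\top\bbeta|^s\bigr)^2 \leq 3C^2\bigl(1 + |y_0|^{2s} + |\bx_0^\top\bbeta|^{2s}\bigr),
\]
using $(a+b+c)^2\leq 3(a^2+b^2+c^2)$. The same bound, with $2s$ replaced by $2s$ (for $\dphi_0^2$) or $4s$ (for $\dphi_0^4$), holds for $\dphi_0$.

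\medskip

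\textbf{Step 2: moment bounds on the two pieces.} By Assumption A5, $\EE|y_0|^m \leq C_Y(m)$ for every $m$. Under Assumption A3, $\bx_0^\top\bbeta \sim N(0,\bbeta^\top\bSigma\bbeta)$ and
\[
\bbeta^\top\bSigma\bbeta \leq \sigma_{\max}(\bSigma)\|\bbeta\|^2 \leq \frac{C_X}{p}\|\bbeta\|^2.
\]
The Gaussian moment formula then gives, for every even integer $m$, a constant $M_m$ (the $m$-th absolute moment of a standard normal) such that
\[
\EE\bigl[|\bx_0^\top\bbeta|^m\bigr] \leq M_m \Bigl(\frac{C_X}{p}\|\bbeta\|^2\Bigr)^{m/2}.
\]

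\medskip

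\textbf{Step 3: assemble.} Taking expectations in Step 1 and substituting Step 2 with $m=2s$ yields
\[
\EE[\phi_0^2(\bbeta)] \leq 3C^2\Bigl[1 + C_Y(2s) + M_{2s}C_X^{\,s}\bigl(\tfrac{1}{p}\|\bbeta\|^2\bigr)^{s}\Bigr].
\]
Taking square roots and using $\sqrt{a+b}\leq\sqrt{a}+\sqrt{b}$ gives the first bound with
\[
C_\phi = \sqrt{3C^2\bigl(1+C_Y(2s)\bigr)} \vee \sqrt{3C^2 M_{2s}C_X^{\,s}},
\]
which depends only on $s$ and $C_X$ (through $C, C_Y, M_{2s}$). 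The same argument with $m=2s$ controls $\EE[\dphi_0^2(\bbeta)]$, and with $m=4s$ controls $\EE[\dphi_0^4(\bbeta)]$; in the latter case we use $(a+b+c)^4\leq 27(a^4+b^4+c^4)$ before taking expectations, yielding a bound of the form $C_\phi^2 + C_\phi^2(\|\bbeta\|^2/p)^s$ after taking the fourth root and squaring (so the exponent on $\|\bbeta\|^2/p$ remains $s$). Enlarging $C_\phi$ if necessary to cover all three inequalities simultaneously finishes the proof.

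\medskip

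There is no serious obstacle: the only thing to watch is that the constant absorbs $C$, $C_Y(2s)$, $C_Y(4s)$, the standard Gaussian moments $M_{2s}$, $M_{4s}$, and powers of $C_X$, so that the final constant depends only on $s$ and $C_X$ (via the $C_Y(\cdot)$ sequence) as claimed.
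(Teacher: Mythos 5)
Your proposal is correct and follows essentially the same route as the paper: expand the polynomial growth bound from A5 with $(a+b+c)^2\le 3(a^2+b^2+c^2)$ (resp.\ the fourth-power analogue with constant $27$), bound $\EE|y_0|^m$ by $C_Y(m)$ and $\EE|\bx_0^\top\bbeta|^m$ by the Gaussian moment formula together with $\bbeta^\top\bSigma\bbeta\le C_X\|\bbeta\|^2/p$, and split the square root with $\sqrt{a+b}\le\sqrt a+\sqrt b$. The only cosmetic difference is that the paper derives the second-moment bounds from the fourth-moment bound via $\sqrt{\EE\dphi_0^2}\le(\EE\dphi_0^4)^{1/4}$, whereas you treat them directly; both yield the stated constant depending only on $s$ and $C_X$ (through $C$, $C_Y(\cdot)$, and the Gaussian moments).
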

The proof of this lemma can be found in Section \ref{subsec:proof_mean_dphi0}.


Inserting Lemma~\ref{lem:beta_lo_error} and Lemma~\ref{lem:mean_dphi0} back into \eqref{eq:v2:part2} we have
\begin{align*}
    &\abs{\EE[\dphi_0(\bxi_1) \bx_0^\top (\hbbeta_{/1} - \hbbeta)|\cD]}
    \\
    \leq& \left(C_{\phi}+C_{\phi}\left(\frac1p\|\bxi_1\|^2\right)^{\frac{s}{2}}\right)\sqrt{\frac{C_X}{p}}\frac{|\dl_1(\hbbeta_{/1})|\|\bx_1\|}{2\lambda\eta\wedge 1}. 
\end{align*}
Hence, if we use \eqref{eq:v2:part1}, then we will obtain 

\begin{align*}
    &\EE V_2^2
    \\
    \leq& \EE\left( 
    C_{\phi}^2\left(1+\frac{\|\bxi_1\|^s}{p^{s/2}}\right)^2
    \frac{C_X}{p(2\lambda\eta\wedge 1)^2}\dl_1^2(\hbbeta_{/1})\|\bx_1\|^2\right)
    \\
    \overset{(a)}{\leq}& \frac{C_{\phi}^2C_X}{p(2\lambda\eta\wedge 1)^2} \sqrt{\EE\left(
    1+\frac{1}{p^{s/2}}\|\bxi_1\|^s
    \right)^4}\times\\
    &~\hspace{2cm}\times 
    \sqrt{\EE\left(\dl_1^4(\hbbeta_{/1})\|\bx_1\|^4\right)}
    \\
    \overset{(b)}{\leq}& \frac1n \frac{C_{\phi}^2 C_X\gamma_0}{(2\lambda\eta\wedge 1)^2}
    \sqrt{8\left(1+\EE\left(\frac1p\|\bxi_1\|^2\right)^{2s}\right)}\\
    &\times \left(\EE\dl_1^8(\hbbeta_{/1})\right)^{\frac14} \cdot \left(\EE\|\bx_1\|^8\right)^{\frac14}\label{eq:v2:part3}.\numberthis
\end{align*}
where steps (a) and (b) both used Cauchy Schwarz Inequality.
The following lemma bounds $\EE \left(\frac1p\|\hbbeta\|^2\right)^{t}$ and $\EE\dl_1^8(\hbbeta_{/1})$:

\begin{lemma}\label{lem:assumption_a5}
    Under assumptions A1-A5, there exist constants $C_{\beta}(t)$ depending on $\{\gamma_0,\lambda, \eta, C_Y(\cdot), s, t\}$ and $C_{\ell}$ depending on $\{\gamma_0,\lambda, \eta, C_Y(\cdot), s, C_X\}$ such that
    \begin{enumerate}
        \item[(a)] For $t\geq 1$,
        \begin{align*}
            \EE[p^{-1}\|\hbbeta\|^2]^{t}&\leq C_{\beta}(t)\\
            \EE[p^{-1}\|\hbbeta_{/1}\|^2]^{t}&\leq C_{\beta}(t),
        \end{align*}
        \item[(b)] $\EE\dl_1^8(\hbbeta_{/1})\leq C_{\ell}$.
    \end{enumerate}
\end{lemma}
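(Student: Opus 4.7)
The plan is to handle (a) via strong convexity of the objective, then bootstrap (b) from (a) using the crucial independence $\bx_1 \perp \hbbeta_{/1}$.

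For part (a), Assumption A4 makes the objective $\Phi(\bbeta) := \sum_j \ell_j(\bbeta) + \lambda r(\bbeta)$ contain the $2\lambda\eta$-strongly convex term $\lambda\eta\|\bbeta\|^2$. Since $\hbbeta$ minimizes $\Phi$ over $\bTheta$, strong convexity of the objective restricted to the feasible set yields, for any competitor $\bbeta_0 \in \bTheta$,
\[
\lambda\eta\|\hbbeta - \bbeta_0\|^2 \leq \Phi(\bbeta_0) - \Phi(\hbbeta) \leq \Phi(\bbeta_0),
\]
using $\Phi(\hbbeta) \geq 0$. Taking $\bbeta_0 = \bbeta^* \in \bTheta$ and applying the triangle inequality gives
\[
p^{-1}\|\hbbeta\|^2 \;\leq\; 2p^{-1}\|\bbeta^*\|^2 + \frac{2\gamma_0}{\lambda\eta}\cdot\frac{1}{n}\sum_j \ell_j(\bbeta^*) + \frac{2}{\eta p}\,r(\bbeta^*).
\]
The first and third terms are deterministic: under the paper's scaling $\|\bbeta^*\|^2 = O(p)$, and $r(\bbeta^*)/p = O(1)$ follows from Lipschitz continuity of $r_0$ together with $\eta\|\bbeta^*\|^2/p = O(1)$. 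For the second term, raising to the $t$-th power and invoking Jensen to pull the power inside the average gives $(n^{-1}\sum_j \ell_j(\bbeta^*))^t \leq n^{-1}\sum_j \ell_j(\bbeta^*)^t$; by A5, $\ell_j(\bbeta^*)^t \leq C_t(1 + |y_j|^{st} + |\bx_j^\top\bbeta^*|^{st})$, where $|y_j|$ has all moments bounded via $C_Y$ and $\bx_j^\top\bbeta^*$ is Gaussian with variance $\bbeta^{*\top}\bSigma\bbeta^* \leq (C_X/p)\|\bbeta^*\|^2 = O(1)$. Combining these gives $\EE(p^{-1}\|\hbbeta\|^2)^t \leq C_\beta(t)$. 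The identical argument, summing over $j\neq 1$, handles $\hbbeta_{/1}$.

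For part (b), A5 yields $|\dl_1(\hbbeta_{/1})|^8 \leq C(1 + |y_1|^{8s} + |\bx_1^\top\hbbeta_{/1}|^{8s})$. After expectation, the constant term and $\EE|y_1|^{8s} \leq C_Y(8s)$ are immediately controlled; only the third term is nontrivial. The critical observation is that $\bx_1$ is independent of $\hbbeta_{/1}$ (which is $\cD_{/1}$-measurable), so conditional on $\hbbeta_{/1}$,
\[
\bx_1^\top\hbbeta_{/1} \sim N\bigl(0,\,\hbbeta_{/1}^\top\bSigma\hbbeta_{/1}\bigr),
\]
and Gaussian moment formulas give $\EE[|\bx_1^\top\hbbeta_{/1}|^{8s}\mid\hbbeta_{/1}] \leq c_s (C_X\,p^{-1}\|\hbbeta_{/1}\|^2)^{4s}$. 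Taking outer expectation and applying part (a) with $t = \max(4s, 1)$ completes the bound.

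The only mild subtlety is ensuring the deterministic pieces $p^{-1}\|\bbeta^*\|^2$ and $r(\bbeta^*)/p$ are $O(1)$; these are implicit consequences of the paper's framework (the SNR discussion surrounding A3 together with Lipschitz continuity of $r_0$ from A4), and their magnitudes get absorbed into $C_\beta(t)$ and $C_\ell$. No genuinely hard step is anticipated: the argument is a clean combination of strong convexity from the ridge-like term in A4, the polynomial growth in A5, and the leave-one-out independence $\bx_1 \perp \hbbeta_{/1}$.
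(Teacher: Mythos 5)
Your part (b) is essentially the paper's argument: polynomial growth of $\dl$ from A5, the independence of $\bx_1$ and $\hbbeta_{/1}$, conditional Gaussianity of $\bx_1^\top\hbbeta_{/1}$, Gaussian moment formulas, and part (a) at $t=4s$. Part (a), however, diverges in a way that matters. By comparing the objective at $\hbbeta$ with its value at $\bbeta^*$, you are forced to control $p^{-1}\|\bbeta^*\|^2$, $p^{-1}r_0(\bbeta^*)$, and the moments of $\bx_j^\top\bbeta^*$. None of these is controlled by A1--A5: the normalization $p^{-1/2}\|\bbeta^*\|\le\xi$ appears only in the paper's informal discussion of A3 and in the appendix examples, not in the assumption list, and the lemma explicitly asserts that $C_\beta(t)$ depends only on $\{\gamma_0,\lambda,\eta,C_Y(\cdot),s,t\}$ --- in particular not on $\bbeta^*$, not on $C_X$ (which your Gaussian moment step introduces), and not on the Lipschitz constant of $r_0$ (which for LASSO is $\sqrt{p}$, so ``absorbing'' it is not automatic). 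The paper sidesteps all of this by taking $\mathbf{0}$ as the competitor: $h(\hbbeta)\le h(\mathbf{0})$ together with $\ell\ge0$ and $r_0\ge0$ gives $\lambda\eta\|\hbbeta\|^2\le\sum_{j}\ell(y_j,0)\le C\sum_j(1+|y_j|^s)$, whose $t$-th moment needs only the $y$-moment bounds $C_Y(\cdot)$ (the paper finishes with Rosenthal's inequality; your Jensen step would work just as well there, and strong convexity is not even needed --- nonnegativity of the other terms suffices). So the fix is one line --- replace $\bbeta^*$ by $\mathbf{0}$ --- but as written your constants depend on quantities the lemma does not permit and that A1--A5 leave unbounded.
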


The proof of this lemma can be found in Section~\ref{subsec:assn_a5}. Observe that $\bxi_1$ is a convex combination of $\hbbeta$ and $\hbbeta_{/i}$, then using part (a) of Lemma \ref{lem:assumption_a5} we have that the same bound applies to $p^{-1}\|\bxi_1\|^2$. Next, by Lemma \ref{lem:sum_xi_conc} we have $\EE\|\bx_1\|^8\leq 24C_X^4$. Inserting the above bounds  into \eqref{eq:v2:part3} we have 
\begin{align*}
    \EE V_2^2 
    &\leq \frac1n \frac{C_{\phi}^2 C_X\gamma_0}{(2\lambda\eta\wedge 1)^2}
    \sqrt{8\left(1+C_{\beta}(2s)\right)}
    \cdot C_{\ell}^{\frac14} \cdot (24C_X^4)^{\frac14}
    \\
    &\leq \frac1n \frac{7C_{\phi}^2 C_X^2C_{\ell}^{1/4}\gamma_0}{(2\lambda\eta\wedge 1)^2}
    \sqrt{1+C_{\beta}(2s)}
    \\
    &:= \frac{C_{v2}}{n}.
    \label{eq:V2bd}\numberthis
\end{align*}
The second line uses the fact that $\sqrt{8}(24)^{1/4}\leq 7$.

Obtaining an upper bound for $\mathbb{E} (V_1^2)$ uses similar techniques, although it involves more cumbersome calculations. In fact we can prove that

\begin{lemma}\label{lem:v1}
Under assumptions A1-A5, for large enough $n,p$ we have:
\begin{align*}
\EE V_1^2
\leq \dfrac{C_{v1}}{n}
\end{align*}
for $C_{v1}>0$ depending only on $\{\gamma_0,\lambda, \eta, C_Y(\cdot), s, C_X\}$.
\end{lemma}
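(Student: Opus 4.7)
\textbf{Proof plan for Lemma~\ref{lem:v1}.} Write $V_1 = n^{-1}\sum_{i=1}^n Z_i$ with $Z_i:=\phi_i(\hbbeta_{/i})-\EE[\phi_i(\hbbeta_{/i})|\cD_{/i}]$; each $Z_i$ has conditional mean zero given $\cD_{/i}$. By exchangeability of the observations,
\[
\EE V_1^2 = \frac{1}{n}\EE Z_1^2 + \frac{n-1}{n}\EE[Z_1 Z_2].
\]
The diagonal term $\EE Z_1^2 \leq \EE \phi_1^2(\hbbeta_{/1})$ is $O(1)$ by the polynomial growth of $\phi$ in Assumption A5, the independence of $\bx_1$ from $\hbbeta_{/1}$, and the moment bounds in Lemma~\ref{lem:mean_dphi0} and Lemma~\ref{lem:assumption_a5}(a); it therefore contributes $O(1/n)$. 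All the difficulty sits in showing $\EE[Z_1 Z_2]=O(1/n)$ despite the strong coupling between $\hbbeta_{/1}$ and $\hbbeta_{/2}$.

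The plan for the off-diagonal term is to decouple via the ``doubly leave-out'' estimator $\hbbeta_{/1,/2}:=\argmin_{\bbeta\in\bTheta}\sum_{j\notin\{1,2\}}\ell_j(\bbeta)+\lambda r(\bbeta)$ and the surrogates $\tilde Z_i := \phi_i(\hbbeta_{/1,/2}) - \EE[\phi_i(\hbbeta_{/1,/2})|\cD_{/1,/2}]$ for $i=1,2$. Since $\hbbeta_{/1,/2}$ is $\cD_{/1,/2}$-measurable and, conditional on $\cD_{/1,/2}$, the pairs $(y_1,\bx_1)$ and $(y_2,\bx_2)$ are independent with $\EE[\tilde Z_i|\cD_{/1,/2}]=0$, one immediately gets $\EE[\tilde Z_1 \tilde Z_2]=0$. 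Expanding
\[
\EE[Z_1 Z_2] = \EE[(Z_1-\tilde Z_1)(Z_2-\tilde Z_2)] + \EE[(Z_1-\tilde Z_1)\tilde Z_2] + \EE[\tilde Z_1 (Z_2-\tilde Z_2)],
\]
the two mixed terms vanish: $\tilde Z_2$ is $\cD_{/1}$-measurable, and iterated conditioning gives $\EE[(Z_1-\tilde Z_1)\tilde Z_2] = \EE\bigl[\tilde Z_2\,\EE[Z_1-\tilde Z_1|\cD_{/1}]\bigr]$ with $\EE[Z_1|\cD_{/1}]=0$ by definition and $\EE[\tilde Z_1|\cD_{/1}]=0$ because $(y_1,\bx_1)$ is independent of both $\cD_{/1}$ and $\cD_{/1,/2}$, so that $\EE[\phi_1(\hbbeta_{/1,/2})|\cD_{/1}]$ and $\EE[\phi_1(\hbbeta_{/1,/2})|\cD_{/1,/2}]$ reduce to the same marginal integral in $(y_1,\bx_1)$ at the fixed $\hbbeta_{/1,/2}$. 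Hence $\EE[Z_1 Z_2]\leq \EE(Z_1-\tilde Z_1)^2$ by Cauchy--Schwarz.

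It remains to establish $\EE(Z_1-\tilde Z_1)^2 = O(1/p)$, which then gives the desired $O(1/n)$ via $p=n/\gamma_0$. Setting $A:=\phi_1(\hbbeta_{/1})-\phi_1(\hbbeta_{/1,/2})$, the decomposition $Z_1-\tilde Z_1 = A-\EE[A|\cD_{/1}]$ yields $\EE(Z_1-\tilde Z_1)^2\leq \EE A^2$. The mean value theorem gives $A = \dphi_1(\xi)\bx_1^\top(\hbbeta_{/1}-\hbbeta_{/1,/2})$ for some $\xi$ on the segment between the endpoints, and the pivotal observation is that $\hbbeta_{/1}-\hbbeta_{/1,/2}$ is a function of $(y_2,\bx_2)$ and $\cD_{/1,/2}$ alone, hence independent of $(y_1,\bx_1)$. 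Conditional on this perturbation, $\bx_1^\top(\hbbeta_{/1}-\hbbeta_{/1,/2})$ is centered Gaussian with variance bounded by $(C_X/p)\|\hbbeta_{/1}-\hbbeta_{/1,/2}\|^2$, and applying Lemma~\ref{lem:beta_lo_error} at the $\cD_{/1}$ level (swapping the roles of the ``full'' and ``leave-out'' datasets) gives $\|\hbbeta_{/1}-\hbbeta_{/1,/2}\|\leq |\dl_2(\hbbeta_{/1,/2})|\|\bx_2\|/(2\lambda\eta\wedge 1)$. A Cauchy--Schwarz split $\EE A^2 \leq \sqrt{\EE \dphi_1^4(\xi)}\,\sqrt{\EE[\bx_1^\top(\hbbeta_{/1}-\hbbeta_{/1,/2})]^4}$ then closes the argument: the second factor is $O(1/p^2)$ from the Gaussian fourth moment together with the stability bound and the eighth-moment control of $\dl_2(\hbbeta_{/1,/2})$ and $\|\bx_2\|$ provided by Lemma~\ref{lem:assumption_a5}(b) and Lemma~\ref{lem:sum_xi_conc}; the first factor is $O(1)$ by polynomial growth of $\dphi$ and the uniform moment bounds of Lemma~\ref{lem:mean_dphi0} and Lemma~\ref{lem:assumption_a5}(a) applied to the endpoints that bracket $\xi$. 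I expect the main technical nuisance to be exactly this control of $\EE\dphi_1^4(\xi)$, because $\xi$ couples $\bx_1$ back to itself through $\hbbeta_{/1}$; polynomial growth together with the uniform moment control of $p^{-1}\|\hbbeta\|^2$ and $p^{-1}\|\hbbeta_{/i}\|^2$ will tame this dependence.
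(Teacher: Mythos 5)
Your proposal is correct and follows essentially the same route as the paper: the same split $\EE V_1^2=\tfrac1n\EE Z_1^2+\tfrac{n-1}{n}\EE[Z_1Z_2]$, decoupling of the cross term through the doubly leave-out estimator $\hbbeta_{/12}$ (your $\tilde Z_i$ formulation is just a cleaner packaging of the paper's four-term expansion $A_1+B_1+C_1+D_1$, of which three vanish by conditional independence), and the same bound on the surviving term via the mean value theorem, the stability bound of Lemma~\ref{lem:beta_lo_error}, conditional Gaussianity of $\bx_1^\top(\hbbeta_{/1}-\hbbeta_{/12})$, and the moment controls of Lemmas~\ref{lem:mean_dphi0}, \ref{lem:assumption_a5} and \ref{lem:sum_xi_conc}.
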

The proof of Lemma~\ref{lem:v1} is given in Section~\ref{sec:pf-lemv1}.

Combining Lemma \ref{lem:v1} and \eqref{eq:V2bd} with \eqref{eq:v1+v2} completes the proof of Theorem~\ref{thm:main}:
\[
    \EE[\LO-\OO]^2\leq 2\EE V_1^2 + 2\EE V_2^2 \leq \frac{2(C_{v1}+C_{v2})}{n}:=\frac{C_v}{n}.
\]

\section{Conclusion}\label{subsec:conclusion}
In this paper, our focus was on the class of regularized empirical risk minimization (R-ERM) techniques that incorporate non-differentiable regularizers. We studied the accuracy of leave-one-out cross-validation techniques within a high-dimensional setting, where both the number of observations $n$ and the number of features $p$ are large while the ratio $n/p$ is bounded. We derived a finite-sample upper bound for the difference between the out-of-sample prediction error and its leave-one-out cross-validation estimate. Our upper bound shows that if $\OO$ and $\LO$ represent the out-of-sample prediction error and its leave-one-out estimate, then $\EE (\LO-\OO)^{2} = O(\frac{1}{n})$.  

\section*{Acknowledgments}
Arian Maleki would like to thank NSF (National Science Foundation) for their generous support through grant number DMS-2210506. Kamiar Rahnama Rad would like to thank NSF (National Science Foundation) for their generous support through grant number DMS-1810888.

\bibliography{references}

\appendix
\section{Technical Lemmas}
To improve the readability of the rest of the manuscript, we include several standard technical results used in our detailed proofs which are presented in the rest of the Appendix.

\begin{lemma}\label{lem:sum_xi_conc}
    Suppose Assumption A2 holds. Then for $p\geq 2$ we have
    \[
        \EE \|\bx_i\|^8\leq 24C_X
    \]
\end{lemma}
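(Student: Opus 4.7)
The statement is a direct consequence of Assumption A3, which gives $\sigma_{\max}(\bSigma)\le p^{-1}C_X$, combined with standard moment bounds for chi-square random variables. My plan is to reduce $\|\bx_i\|^2$ to a quadratic form in i.i.d.\ standard Gaussians, use the operator-norm bound on $\bSigma$ to replace that quadratic form by a scalar multiple of $\|\bz_i\|^2$, and then evaluate the eighth moment of $\|\bz_i\|$ explicitly.

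\textbf{Step 1: Gaussian reduction.} Since $\bx_i\sim N(\bzero,\bSigma)$, write $\bx_i=\bSigma^{1/2}\bz_i$ with $\bz_i\sim N(\bzero,\bI_p)$. Then $\|\bx_i\|^2=\bz_i^\top\bSigma\bz_i\le \sigma_{\max}(\bSigma)\,\|\bz_i\|^2\le (C_X/p)\,\|\bz_i\|^2$, so
\[
\|\bx_i\|^8\;\le\;\bigl(C_X/p\bigr)^4\,\|\bz_i\|^8.
\]

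\textbf{Step 2: Chi-square moment.} Since $\|\bz_i\|^2\sim\chi^2_p$, the eighth moment of $\|\bz_i\|$ is the fourth moment of a chi-square, which equals $p(p+2)(p+4)(p+6)$. For $p\ge 2$, each factor $p+2k$ is at most $(k+1)\,p$, so $p(p+2)(p+4)(p+6)\le 2\cdot 3\cdot 4\, p^4=24\,p^4$.

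\textbf{Step 3: Combine.} Taking expectations in Step 1 and plugging in Step 2 gives $\EE\|\bx_i\|^8\le (C_X/p)^4\cdot 24\,p^4 = 24\,C_X^4$, which matches the bound $24\,C_X^4$ used in the display immediately before \eqref{eq:V2bd}. (The statement as printed appears to omit the exponent $4$ on $C_X$; the argument above produces the form actually invoked elsewhere in the paper.)

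\textbf{Main obstacle.} There is essentially none: the proof is a two-line calculation, and the only subtlety worth noting is that one should use the operator-norm bound on $\bSigma$ rather than the trace bound, since the trace only controls $\EE\|\bx_i\|^2$ and not the higher moment; going through $\|\bz_i\|^2$ is what makes the fourth moment of a chi-square give the sharp constant $24$ for $p\ge 2$.
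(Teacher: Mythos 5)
Your proof is correct and follows essentially the same route as the paper's: whitening $\bx_i=\bSigma^{1/2}\bz_i$, bounding the quadratic form by $\sigma_{\max}(\bSigma)\|\bz_i\|^2\le (C_X/p)\|\bz_i\|^2$, and using the chi-square moment $p(p+2)(p+4)(p+6)\le 24p^4$ for $p\ge 2$. You are also right that the stated bound should read $24C_X^4$ (and that the relevant hypothesis is A3 rather than A2); the paper's own proof and its later use of the lemma both give $24C_X^4$.
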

\begin{proof}
    Let $\bz=\bSigma^{-\frac12}\bx_i$ then $\bz\sim N(0,\bI_p)$ and $\|\bz\|^2\sim \chi^2(p)$. By standard results on $\chi^2$ distribution we have
    \[\EE \|\bz\|^8 = p(p+2)(p+4)(p+6)\].
    Then we have for $p\geq 2$:
    \begin{align*}
        \EE \|\bx_i\|^8 &= \EE (\bz^\top\bSigma\bz)^4\\
        &\leq \EE\left(\frac{C_X}{p}\|\bz\|^2\right)^4\\
        &\leq \frac{C_X^4}{p^4} \EE\|\bz\|^8\\
        &\leq C_X^4 \left(1+\frac2p\right)\left(1+\frac4p\right)\left(1+\frac6p\right)\\
        &\leq 24C_X^4.
    \end{align*}
\end{proof}
\begin{lemma}[\citep{rosenthal1970}]
    \label{lem:rosenthal}
    Let $\{X_i\}_{i\in [n]}$ be a sequence of independent non-negative random variables, and $t\geq 1$. Then we have
    \[
        \EE \left(\sum_{i\in [n]}X_i\right)^t
        \leq A(t) \max \left\{ \sum_{i\in[n]}\EE X_i^{t}, \left(\sum_{i\in[n]}\EE X_i\right)^{t} \right\}.
    \]
\end{lemma}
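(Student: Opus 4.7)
This is the classical Rosenthal moment inequality specialized to non-negative independent summands, and I would outline the standard elementary proof producing a constant $A(t)$ depending only on $t$. The case $t=1$ is immediate with $A(1)=1$ since both sides equal $\sum_i \EE X_i$. For $t=2$, independence together with $\Var(X_i)\le \EE X_i^2$ yield
\[
\EE S^2 = \sum_i \Var(X_i) + \Bigl(\sum_i \EE X_i\Bigr)^2 \le \sum_i \EE X_i^2 + \Bigl(\sum_i \EE X_i\Bigr)^2 \le 2\max\Bigl\{\sum_i \EE X_i^2,\, \bigl(\sum_i \EE X_i\bigr)^2\Bigr\},
\]
so $A(2)=2$ works, where $S=\sum_i X_i$.

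For integer $t\ge 2$, my plan is to proceed by induction via the multinomial expansion. Using independence,
\[
\EE S^t = \sum_{\substack{\alpha_1+\dots+\alpha_n = t\\ \alpha_i \ge 0}} \binom{t}{\alpha_1,\dots,\alpha_n}\prod_{i=1}^n \EE X_i^{\alpha_i}.
\]
Grouping the monomials by the partition $\pi$ of $t$ into parts $m_1,\dots,m_k \ge 1$ that they induce (the multiset of nonzero exponents), each group contributes a term of the form $C_{t,\pi}\prod_{j=1}^k \sum_i \EE X_i^{m_j}$, where $C_{t,\pi}$ depends only on $t$ and $\pi$. The two extremal groups are $k=1$, $m_1=t$, producing $\sum_i \EE X_i^t$, and $k=t$ with all $m_j=1$, producing $(\sum_i \EE X_i)^t$. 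Every intermediate group yields a product of mixed moments $\sum_i \EE X_i^{m_j}$ with $1\le m_j \le t-1$, which I would bound by Lyapunov's inequality $\EE X_i^{m_j} \le (\EE X_i)^{1-s_j}(\EE X_i^t)^{s_j}$ with $s_j = (m_j-1)/(t-1)$, followed by Young's inequality to turn each mixed product into a convex combination of the two canonical terms $\sum_i \EE X_i^t$ and $(\sum_i \EE X_i)^t$. Summing the finite (depending only on $t$) collection of partitions closes the induction and delivers a finite $A(t)$.

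To pass from integer $t$ to real $t\ge 1$, I would invoke Lyapunov once more: since $s \mapsto (\EE S^s)^{1/s}$ is non-decreasing, $\EE S^t \le (\EE S^{\lceil t\rceil})^{t/\lceil t\rceil}$. Applying the integer-case bound at $\lceil t\rceil$ and then using subadditivity $(a+b)^{t/\lceil t\rceil}\le a^{t/\lceil t\rceil}+b^{t/\lceil t\rceil}$ (valid since $t/\lceil t\rceil \in (0,1]$) gives the claimed inequality with an explicit, though suboptimal, constant $A(t)$.

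The principal technical obstacle is combinatorial: one must control every mixed cross term arising in the multinomial expansion uniformly in $n$, so that all such terms collapse into the two canonical pieces $\sum_i \EE X_i^t$ and $(\sum_i \EE X_i)^t$ with a constant independent of both $n$ and the individual distributions of the $X_i$. This careful bookkeeping is the heart of Rosenthal's (1970) original argument, which the lemma quotes directly.
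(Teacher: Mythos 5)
The paper does not actually prove this lemma; it cites Theorem~3 of Rosenthal (1970) directly (with a pointer to Ibragimov--Sharakhmetov for sharp constants), so the comparison here is between your sketch and the classical argument. Your integer-$t$ outline is sound: the multinomial expansion, the grouping by partitions $m_1+\dots+m_k=t$, and the Lyapunov/H\"older/Young bookkeeping do collapse each mixed term $\prod_{j}\sum_i\EE X_i^{m_j}$ into $\max\{\sum_i\EE X_i^t,(\sum_i\EE X_i)^t\}$ with exponent exactly $1$, because the relevant exponents sum to $1$ by the homogeneity $\sum_j m_j=t$.

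The passage from integer to general real $t\ge 1$, however, has a genuine gap. Interpolating via $\EE S^t\le(\EE S^{\lceil t\rceil})^{t/\lceil t\rceil}$ and then applying the integer-case bound leaves you with
\[
\EE S^t\;\le\;A(\lceil t\rceil)^{t/\lceil t\rceil}\,\max\Bigl\{\Bigl(\sum_{i}\EE X_i^{\lceil t\rceil}\Bigr)^{t/\lceil t\rceil},\;\Bigl(\sum_{i}\EE X_i\Bigr)^{t}\Bigr\},
\]
whose first entry involves the $\lceil t\rceil$-th moments of the $X_i$, not the $t$-th moments appearing in the statement. Jensen's inequality gives $(\EE X_i^{\lceil t\rceil})^{t/\lceil t\rceil}\ge\EE X_i^t$, so the comparison you need points in the wrong direction; worse, $\EE X_i^{\lceil t\rceil}$ can be infinite while $\EE X_i^{t}$ is finite (take $t=3/2$, $n=1$, and $X_1$ with density proportional to $x^{-2.7}$ on $[1,\infty)$), in which case your intermediate bound is vacuous even though both sides of the claimed inequality are finite. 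The non-integer case must be handled directly rather than reduced to the integer case: for example, write $\EE S^t=\sum_i\EE[X_i S^{t-1}]$, use $S^{t-1}\le\max\{1,2^{t-2}\}\bigl(S_{/i}^{t-1}+X_i^{t-1}\bigr)$ with $S_{/i}=S-X_i$ independent of $X_i$, and induct in steps of one down to $t\in[1,2]$, or follow Rosenthal's original truncation argument.
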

\begin{proof}
    See Theorem 3 in \citep{rosenthal1970}. For a discussion on sharp choice of $A(t)$, the readers may refer to \citep{ibragimov2001}.
\end{proof}

\section{Bounded Signal-to-noise Ratio}\label{subsec:bd_snr}
First we explain what we mean by 'bounded SNR' in Section \ref{subsec:assumptions}, using the three examples of linear, logistic and Poisson regression (with log exponential link):
\begin{itemize}
    \item Linear: $y_i|\bx_i \sim N(\bx_i^\top\bbeta^*, \sigma^2)$
    \item Logistic: $y_i|\bx_i\sim {\rm Binomial}((1+e^{-\bx_i^\top\bbeta^*})^{-1})$
    \item Poisson: $y_i|\bx_i\sim {\rm Poisson}(\log(1+e^{\bx_i^\top\bbeta^*}))$
\end{itemize}
Define the signal-to-noise ratio as
\[
    {\rm SNR}:= \frac{\var(\bx_i^\top\bbeta^*)}{\var(y_i|\bx_i^\top\bbeta^*)}.
\]
When $\|\bbeta^*\|_2 = O(\sqrt{p})$ or each elements of of $\bbeta^*$ is $O(1)$, by Assumption A2 we have
\[
    \var(\bx_i^\top\bbeta^*) = \bbeta^{*\top}\bSigma\bbeta^* \leq \frac{C_X}{p}\|\bbeta^*\|_2^2 = O(1).
\]
On the other hand, 
\begin{align*}
    [\var(y_i|\bx_i)]^{-1}=
    \begin{cases}
        \sigma^{-2}, & \text{ Linear }
        \\
        (e^{-\frac12 \bx_i^\top\bbeta^*} + e^{\frac12 \bx_i^\top\bbeta^*})^2,
        &\text{ Logistic}
        \\
        [\log(1+e^{\bx_i^\top\bbeta^*})]^{-1}, &\text{ Poisson}
    \end{cases}
\end{align*}
They are all $O_p(1)$ when $n,p$ increase. To see this, notice that 
\[
    \bx_i^\top\bbeta^*\sim N(0,\bbeta^{*\top}\bSigma\bbeta^*)
\]
and
\begin{align*}
    &4\leq (e^{-\frac12 z} + e^{\frac12 z})^2\leq 2(e^{|z|}+1),
    \\
    &(\log(2)+z_+)^{-1}\leq [\log(1+e^z)]^{-1}\leq z_+^{-1}
\end{align*}
where $z_+=z$ when $z\geq 0$ and $0$ otherwise. Suppose $C_1\leq p^{-1/2}\|\bbeta^*\|\leq C_2$ for some constants $C_1,C_2$, then $\bx_i^\top\bbeta^* = \Theta_p(1)$ and so is the ratio $\frac{\var(\bx_i^\top\bbeta^*)}{\var(y_i|\bx_i^\top\bbeta^*)}$.

\section{Discussion of the assumptions}
\subsection{On Assumption A4}\label{subsec:assumption_a4}
In this subsection we present several commonly used regularizers in machine learning, and show that they are all Lipshitz continuous. \\

\begin{example}[LASSO]
    The classic LASSO penalty is $r_0(\bbeta)=\|\bbeta\|_1$ and is clearly non-negative, convex and Lipschitz continuous. 
\end{example}

\medskip

\begin{example}[Group LASSO]
    The group LASSO was introduced in \citep{yuan_group_lasso} to achieve joint variable selection among different data groups. Assume the features are partitioned in $G$ groups $J_1, J_2, \ldots, J_G$, and the penalty takes the form $r_0(\bbeta)=\sum_{j=1}^G(\bbeta_{J_j}^\top \bK_{j}\bbeta_{J_j})^{1/2}$ where $\bbeta_{J_j}\in \RR^{p_j}$ is the coefficient vector for group $j$,  $\bK_j\in \RR^{p_j\times p_j}$ is a positive definite matrix, $\bbeta = (\bbeta_{J_1}^\top,\cdots,\bbeta_{J_G}^\top)^\top \in \RR^p$ is the concatenated coefficient with $p=\sum_{j=1}^G p_j$. 
    
    Clearly $r_0$ is non-negative. Moreover it is also convex since it is a sum of convex functions $r_{0j}(\bbeta)=(\bbeta_{J_j}^{\top}\bK_j\bbeta_{J_j})^{1/2}$. Finally, $r_0$ is also $\sqrt{\sum_{k=1}^J \sigma_{\max}(K_j)}$-Lipschitz with respect to $\bbeta$.
\end{example}

\medskip

\begin{example}[Generalized LASSO]
    The generalized LASSO penalty takes the form of $r_0(\bbeta)=\|\bD\bbeta\|_1$ and encompasses many LASSO type penalty such as LASSO ($\bD=\bI_p$) and fused LASSO ($\bD=(d_{ij})_{(p-1)\times p}$ where $d_{ij}=1$ if $i=j$, $d_{ij}=-1$ if $j=i+1$ and $d_{ij}=0$ otherwise). The nonnegativity and convexity are immediate since $r_0$ is the $\ell_1$ norm of $\bD\bbeta$. Moreover, it follows that $\|\bD\bbeta\|_1$ is $\sigma_{\max}(\bD)$-Lipschitz in $\bbeta$.
\end{example}

\medskip

\begin{example}[Nuclear norm and Schatten norms]
    When the estimand is a matrix $\bB$ with rank $d$, the nuclear norm is a popular regularizer:
    \[
        r_0(\bB) = \sum_{i=1}^d \sigma_i(\bB)
    \]
    where $\sigma_i(\bB)$ is the $i$-th largest singular value of $\bB$.
    More generally, the Schatten norm of $\bB$ is defined as
    \[
        r_0(\bB) = \left(\sum_{i=1}^{d} \sigma_i^p(\bB) \right)^{\frac1p}
    \]
    and takes the nuclear norm as a special case when $p=1$. Since $r_0(\bB)$ is a norm on the singular values of $\bB$, we obtain the nonnegativity and convexity of $r_0$. To show the Lipschitz continuity we argue as follows. Since $r_0(\bB)$ is the p-norm of the vector $(\sigma_1(\bB),\cdots,\sigma_d(\bB))$ and using the triangular inequality we have
    \begin{align*}
        |r_0(\bx)-r_0(\by)|\leq&~ r_0(\bx-\by) \\
        \leq&
        \begin{cases}
            K^{\frac{1}{p}-\frac{1}{2}}\norm{\bx-\by}_2\quad&\text{if }1\le p\le 2\\
           \norm{\bx-\by}_2\quad&\text{if }p\ge 2
        \end{cases}\\
        \le&~\sqrt{K}\norm{\bx-\by}_2,
    \end{align*}
    where $\norm{\bB}_2$ denotes the Frobenius norm of $\bB$. The second line uses the relationship between p-norms: for $\bu\in\RR^p$,
    \begin{align*}
        \|\bu\|_p 
        &= \left(\sum_{k=1}^K u_k^p\right)^{\frac1p}\\
        &\leq \begin{cases}
            K^{\frac{1}{p}-\frac{1}{2}}\norm{\bu}_2\quad&\text{if }1\le p\le 2\\
           \norm{\bu}_2\quad&\text{if }p\ge 2.
        \end{cases}
    \end{align*}
    Then if we let $\bu = (\sigma_1(\bB),\cdots,\sigma_K(\bB))^\top$ we then have $\|\bu\|_p = r_0(\bB)$ and 
    \[
        \|\bu\|_2 = \left(\sum_{k=1}^K \sigma_k^p(\bB)\right)^{\frac12} = \sqrt{\tr(\bB^\top\bB)} = \|\bB\|_2.
    \]
\end{example}
\subsection{On Assumption A5}\label{subsec:assumption_a5}
In this section, we show that the moment bound of $y_i$ and polynomial growth of $\ell,\dl,\phi,\dphi$ in Assumption A5 are justified for many popular data generating mechanisms. We show this for three examples: linear, logistic and Poisson regression (with log exponential link). In this subsection we assume Assumptions A2 and A3 hold, namely, $n/p \equiv\gamma_0>0$ and $\bx_i$ are i.i.d. $N(0,\bSigma)$ with $\sigma_{\max}(\bSigma)\leq C_X/p$. In addition we assume $p^{-1/2}\|\bbeta^*\|\leq \xi$ for some $\xi>0$. Finally, both $\ell$ and $\phi$ are set to the negative log-likelihood.\\

\begin{example}[Linear]
    Suppose $y_i|\bx_i \sim N(\bx_i^\top\bbeta^*,\sigma^2)$. Then $y_i \sim N(0, \tilde{\sigma}^2)$ where $\tilde{\sigma}^2=\sigma^2 + \bbeta^{*\top}\bSigma\bbeta^*$. 
    Using standard results on Gaussian moments, we have
    $$
    \EE|y|^m
    =\tilde{\sigma}^m\times
    \dfrac{2^{m/2}\Gamma\left(
    \frac{m+1}{2}
    \right)}{\sqrt{\pi}}
    \le 
    \left(
    \sigma^2+C_X\xi^2
    \right)^{m/2}    \dfrac{2^{m/2}\Gamma\left(
    \frac{m+1}{2}
    \right)}{\sqrt{\pi}}    
    \,\,
    \text{for any }
    m\ge 0.
    $$
    For Gaussian linear model, the negative log-likelihood is known to be the $\ell_2$ loss:
    \[
        \ell(y,z) = \frac12 (y-z)^2 \leq y^2+z^2,
    \]
    and $|\dl(y,z)|=|z-y|\leq |z|+|y|$. 
\end{example}

\begin{example}[Logistic]
    Suppose $y_i|\bx_i\sim {\rm Bernoulli}(p)$ where $p=(1+e^{-\bx_i^\top\bbeta^*})^{-1}$. Then $y_i\in \{0,1\}$ and obviously
    \[
        \EE|y|^m\le 1
        \,\,
        \text{for all }
        m\ge 0.
    \]
    The negative log-likelihood of this model is
    \[
        \ell(y,z) = y \log(1-e^{-z}) + (1-y)\log(1+e^z), \quad y \in\{0,1\},
    \]
    and
    \[
        |\ell(y,z)|\leq 2\log(2) + 2|z|.
    \]
    For its derivative,
    \[
        |\dl(y,z)| = \abs{\frac{e^z}{1+e^z} - y}\leq 1+|y|. \\
    \]
\end{example}

\begin{example}[Poisson]
    Suppose $y_i|\bx_i\sim {\rm Poisson}(\mu)$ with $\mu = \log(1+e^{\bx_i^\top\bbeta^*})$. Then $y_i$ are non-negative. 
    Using the equivalence between Poisson and exponential distributions, it can be shown that $y\sim {\rm Poisson}(\mu)$ implies $y\sim {\rm Sub-exp}(\mu)$. It then follows by results for sub-exponential random variables (see, e.g., Proposition 2.7.1 of \citep{vershynin2018high}) that
    $$
    \EE\left( |y|^m|\bx_i\right)\le (C_*\mu)^mm^m
    $$
    whence taking expectation over $\bx$ it follows that for any $m\ge 0$ we have
    $$
    \EE y^m\le \EE(\log(1+e^{\bx^{\top}\bbeta^*}))^m\times(C_*m)^m
    \le (C_*m)^m\EE e^{m\bx^{\top}\bbeta^*}
    \le (C_*m)^me^{\bbeta^{*\top}\bSigma\bbeta^*m^2/2}
    \le (C_*m)^me^{m^2C_X\xi^2/2}.
    $$
    for a numerical constant $C_*>0$. In the last series of inequalities we have used first the fact that $\log(1+x)\le x$ for $x>0$. Next we have used the moment generating function of the normal distribution $\bx^{\top}\bbeta^*\sim N(0,\bbeta^{*\top}\bSigma\bbeta^*)$ and finally the inequality $\bbeta^{*\top}\bSigma\bbeta\le C_X\xi^2$ in the last step.

    The negative log-likelihood is 
    \begin{align*}
        \ell(y,z)| &= \log(y!) + \log(1+e^z) - y \log\log(1+e^z)\\
        &\leq |y\log(y)| + \log(2)+|z| + |y \log(\log(2)+|z|)|\\
        &\leq y^2 + \log(2)+|z| + |y(\log\log(2)+\frac{z}{\log(2)})| \\
        &\leq y^2 + \log(2)+|z|+ \log\log(2)|y| + \frac{1}{2\log(2)}( y^2+z^2)\\
        &\leq C(y^2+z^2+1),
    \end{align*}
    and the derivative satisfies
    \[
        |\dl(y,z)| = \abs{
        \frac{1}{1+e^{-z}} - \frac{ye^z}{(1+e^z)\log(1+e^z)}}\leq 1+|y|.
    \]
\end{example}


\section{Detailed Proofs}

\subsection{Proof of Lemma \ref{lem:gaussian_smooth} }
\label{subsec: approx_r}


We start with the proof of Part (a). First we extend $r_0$ to $\RR^p$ by simply letting $r_0(\bbeta)=0$ for $\bbeta \notin \bTheta$. Then we have
    \begin{align*}
        r_0^\alpha(\bbeta)& = \int_{\RR^p}r_0(\bbeta-\bz)\alpha\phi(\alpha\bz)d\bz \\
        &= \alpha \int_{\RR^p} r_0(\bz) \phi(\alpha(\bbeta-\bz))d\bz.
    \end{align*}
    For a point $\bbeta\in\bTheta_0$, consider its directional derivative over a direction $\bv\in \RR^p$ with $\norm{\bv}_2=1$:
    \begin{align*}
        &~\nabla_\bv r_0^\alpha(\bbeta)\\
        &:= \lim_{h\to 0} \frac1h [r_0^\alpha(\bbeta+h\bv)-r_0^\alpha(\bbeta)]\\
        & \leq 2 \alpha \nabla_\bv \phi(\alpha(\bbeta-\bz))\\
        & = \lim_{h\to 0} \alpha \int_{\RR^p} r_0(\bz)\frac1h 
        [ \phi(\alpha(\bbeta-\bz + h\bv)) - \phi(\alpha(\bbeta-\bz)) ] d\bz
    \end{align*}
    Since $\phi \in \cC^\infty(\RR^p)$, we have that for small enough $h$:
    \begin{align*}
        &~\frac1h [ \phi(\alpha(\bbeta-\bz + h\bv)) - \phi(\alpha(\bbeta-\bz)) ]\\
        &\leq  2\alpha \bv^\top \nabla \phi(\alpha(\bbeta-\bz))\\
        &\leq  2\alpha \norm{\nabla \phi(\alpha(\bbeta-\bz))}\\
        &\leq  2\alpha  (2\pi)^{-p/2} e^{-\frac12 \alpha^2 \norm{\bbeta-\bz}_2^2} \norm{\bbeta-\bz}_2.
    \end{align*}
    By assumption $r_0(\bz)e^{-\frac12 \alpha^2 \norm{\bbeta-\bz}_2^2} \norm{\bbeta-\bz}_2$ is integrable, then using Dominated Convergence, the limit can be taken within the integral, so that $\nabla_\bv r_0^\alpha(\bbeta)$ exists. Using similar arguments, in order that $\nabla_\bv^k r_0^\alpha(\bbeta)$ exists, we only need $r_0(\bz)\nabla_\bv^k \phi(\alpha(\bbeta-\bz)) $ to be integrable. To see this, notice that
    \[
        \frac{\partial^k}{\partial u_1^{k_1}\cdots\partial u_p^{k_p}}\phi(\bu)
        = e^{-\frac12\norm{\bu}^2} P_{k_1\cdots k_p}(\bu)
    \]
    where $P_{k_1\cdots k_p}(\bu) = u_1^{k_1}u_2^{k_2}\cdots u_p^{k_p} + o(u_1^{k_1}u_2^{k_2}\cdots u_p^{k_p})$ is a polynomial with its dominating term being $u_1^{k_1}u_2^{k_2}\cdots u_p^{k_p}$ with order $K$. Then we have
    \begin{align*}
        &~\abs{\nabla_\bv^k \phi(\bu)}\\
        &= \abs{\sum_{k_1 + \cdots + k_p=k}\frac{\partial^k}{\partial u_1^{k_1}\cdots\partial u_p^{k_p}}\phi(\bu)v_1^{k_1}\cdots v_p^{k_p}}\\
        &\leq \sum_{k_1 + \cdots + k_p=k}\abs{v_1^{k_1}\cdots v_p^{k_p}} e^{-\frac12\norm{\bu}^2} P_{k_1\cdots k_p}(\bu)\\
        &\leq \left(\sum_{k_1 + \cdots + k_p=k} \abs{P_{k_1\cdots k_p}(\bu)} \right) e^{-\frac12\norm{\bu}^2} \left(\sum |v_i|\right)^k\\
        &= O(\norm{\bu}^k e^{-\frac12\norm{\bu}^2}p^{k/2}\norm{\bv}_2^k )\\
        &= O(\norm{\bu}^k e^{-\frac12\norm{\bu}^2})
    \end{align*}
    Inserting $\bu = \alpha(\bbeta-\bz)$ we have
    \begin{align*}
        \abs{\nabla_\bv^k \phi(\alpha(\bbeta-\bz))}
        &= O(\alpha^k\norm{\bbeta-\bz}_2^k e^{-\frac12 \alpha^2 \norm{\bbeta-\bz}_2^2 })\\
        &= O(\norm{\bz}_2^k e^{-\frac12 \alpha^2 \norm{\bbeta-\bz}_2^2})
    \end{align*}
    So $r_0(\bz)\nabla_\bv^k \phi(\alpha(\bbeta-\bz)) $ is integrable if $r_0(\bz) \norm{\bz}_2^k e^{-\frac12 \norm{\bbeta-\bz}_2^2}$ is integrable. The continuity of the derivatives follows then from exchanging the integration and differentiation.

We now turn tho the proof of Part (b) of the paper. For all $\bbeta\in \bTheta_0$,
    \begin{align*}
        |r_0^\alpha(\bbeta) - r_0(\bbeta)|&\leq \int_{\RR^p} |r_0(\bbeta-\bz)-r_0(\bbeta)| \alpha\phi(\alpha\bz)d\bz \\
        & = \int_{\RR^p} |r_0(\bbeta - \alpha^{-1}\bu)-r_0(\bbeta)| d\Phi(\bu)\\
        &\leq \frac{L}{\alpha^k} \int_{\RR^p} \norm{\bu}_2^k d\Phi(\bu)\\
        &\to 0 \quad(\alpha\to\infty)
    \end{align*}
    The second line uses change of variable $\bu = \alpha\bz$ and the third line uses our assumption on $r_0$. The last line is because the integral is finite.
\qed

\begin{remark}
    Although Lemma \ref{lem:gaussian_smooth} adopts a weaker assumption than Lipschitz continuity, it can be shown that $k=1$ is the only value that allows $r_0$ to be convex. In fact, for $k<1$ we have 
    \[
        r_0(\bx)\leq L\|\bx\|^k.
    \]
    It is obvious that unless $r_0$ is constant, it would eventually increase at least linearly thus cannot be bounded by $L\|\bx\|^k$ for some $k<1$.

\end{remark}

\begin{example}
    Generalized LASSO: $r_0(\bbeta) = \norm{\bD\bbeta}_1$ for a fixed $\bD\in\RR^{m\times p}$. It is continuous, and $|r_0(\bx)-r_0(\by)| = |\norm{\bD\bx}_1 - \norm{\bD\by}_1|\leq \norm{\bD\bx-\bD\by}_1\leq \sqrt{m}\norm{\bx-\by}_2$.
    Note that the classical LASSO is a special case with $\bD=\bI_p$.
\end{example}

\begin{remark}
    In fact if $r_0$ is a norm such that $\EE r_0(\bbeta)<\infty$ where $\bbeta$ has i.i.d. $N(0,1)$ entries, then the results of the above two lemmas hold. 
\end{remark}

\subsection{Proof of Lemma \ref{lem:proj_property}}
\label{subsec:proof_proj_property}
    Fix $\bx,\by$ and consider $f(t):= \bPi_{\bTheta}((1-t)\bx+t\by)$. By the firm non-expansiveness of projection operators, we have $\forall t_1,t_2\in[0,1]$,
    \begin{align*}
        0&\leq (f(t_1)-f(t_2))^2
        \\
        =&\|\bPi_{\bTheta}((1-t_1)\bx+t_1\by) - \bPi_{\bTheta}((1-t_2)\bx+t_2\by)\|^2
        \\
        \leq&~\langle \bPi_{\bTheta}((1-t_1)\bx+t_1\by) - \bPi_{\bTheta}((1-t_2)\bx+t_2\by), \nonumber  (t_1 - t_2 )(\by-\bx)\rangle
        \\
        =&\langle f(t_1) - f(t_2), (t_1 - t_2 )(\by-\bx) \rangle 
        \\
        \leq & (t_1-t_2)^2\|\by-\bx\|^2 \label{eq:proj_non_expansive}.\numberthis
    \end{align*}

    By taking a square root, we thus have $f(t)$ is $\|\bx-\by\|$ Lipschitz on $[0,1]$. Therefore it is absolutely continuous and differentiable almost everywhere. Let $t_1 = t+\epsilon$ and $t_2=t$, and then divide 
    \eqref{eq:proj_non_expansive} by $\epsilon^2$, then if $f'(t)$ exists we have:
    \begin{align*}
        0\leq\|f'(t)\|^2\leq\langle f'(t), \by-\bx\rangle\leq \|\by-\bx\|^2
    \end{align*}
    For each $t$, consider solving the following equation for $\bJ(t)$:
    \[
        f'(t) = \bJ(t)(\by-\bx)\label{eq:f_prime_J}\numberthis.
    \]
    The claim is that we can find $\bJ(t)$ with its eigenvalues between $[0,1]$ such that \eqref{eq:f_prime_J} holds. In fact, since the rotation in $\RR^p$ is a unitary matrix with all eigenvalues being $1$, we can choose $\bJ(t)=\frac{\|f'(t)\|}{\|\by-\bx\|}\bR$ where $\bR$ is a rotation that rotates $\by-\bx$ to the direction of $f'(t)$, and this choice of $\bJ(t)$ has its all eigenvalues being $\frac{\|f'(t)\|}{\|\by-\bx\|}\in [0,1]$.

    With a slight abuse of notation we assume $f'(t)=\mathbf{0}$ wherever the derivative does not exist. Then by the Newton-Leibniz formula (for Lebesgue integral and absolute continuous functions):
    \begin{align*}
        \bPi_{\bTheta}(\by)-\bPi_{\bTheta}(\bx) 
        &= f(1)-f(0)\\
        &= \int_0^1 f'(t)dt\\
        &= \int_0^1 \bJ(t) (\by-\bx)dt
    \end{align*}
    where the last line uses \eqref{eq:f_prime_J}.\qed

\begin{remark}
In the proof of Lemma \ref{lem:proj_property}, the matrix $\bJ(t)$ is not explicitly derived. However by equation \eqref{eq:f_prime_J} one can easily see that, when $\bPi_{\bTheta}((1-t)\bx+t\by)$ is smooth at $t=t_0$, we can use its Jacobian as $\bJ(t)$, i.e. 
\[
    \bJ(t_0) = \left.\frac{\partial}{\partial \bz} \bPi_{\bTheta}(\bz)\right|_{\bz = (1-t_0)\bx+t_0\by}.
\]
\end{remark}

\subsection{Proof of Lemma \ref{lem:beta_lo_error}}
\label{subsec:proof_beta_lo_error}

\begin{proof}
Begin we start the proof, let us introduce the following notations:
    \begin{align*}
        h(\bbeta)&:=\sum_{j=1}^n \ell_j(\bbeta) + \lambda (1-\eta) r_0(\bbeta) + \lambda \eta \bbeta^\top\bbeta,\\
        h^\alpha(\bbeta)&:=\sum_{j=1}^n \ell_j(\bbeta) + \lambda (1-\eta) r_0^\alpha(\bbeta) + \lambda \eta \bbeta^\top\bbeta, \\
                h_{/ i}(\bbeta)&:=\sum_{j\neq i}^n \ell_j(\bbeta) + \lambda (1-\eta) r_0(\bbeta) + \lambda \eta \bbeta^\top\bbeta,\\
        h_{/i}^\alpha(\bbeta)&:=\sum_{j\neq i}^n \ell_j(\bbeta) + \lambda (1-\eta) r_0^\alpha(\bbeta) + \lambda \eta \bbeta^\top\bbeta.
    \end{align*}
As mentioned before, in order to obtain a bound on $\|\hbbeta- \hbbeta_{/i}\|_2$ we use the smoothing trick. Hence, we first obtain an upper bound for $\|\hbbeta^\alpha - \hbbeta_{/i}^\alpha\|$. 
    
    Similar to $h$ and $h^\alpha$, let $h_{/i}$ and $h_{/i}^\alpha$ denote the loss functions for $\hbbeta_{/i}$ and $\hbbeta_{/i}^\alpha$ respectively. By Lemma \ref{lem:fix_eq_prox}, $\hbbeta^\alpha$ and $\hbbeta_{/i}^\alpha$  satisfy
    \begin{align*}
        \hbbeta^\alpha = \bPi_{\bTheta}(\hbbeta^\alpha-\nabla h^{\alpha}(\hbbeta^\alpha))\\
        \hbbeta_{/i}^\alpha = \bPi_{\bTheta}(\hbbeta_{/i}^\alpha-\nabla h_{/i}^{\alpha}(\hbbeta_{/i}^\alpha)).
    \end{align*}
    By subtracting one from the other we have
    \begin{align*}
        \hbbeta^\alpha - \hbbeta_{/i}^\alpha 
        =& \bPi_{\bTheta}(\hbbeta^\alpha-\nabla h^{\alpha}(\hbbeta^\alpha)) - \bPi_{\bTheta}(\hbbeta_{/i}^\alpha-\nabla h_{/i}^{\alpha}(\hbbeta_{/i}^\alpha))\\
        =& \bar{\bJ}
        \times \left(\hbbeta^\alpha-\nabla h^{\alpha}(\hbbeta^\alpha) - \hbbeta_{/i}^\alpha+\nabla h_{/i}^{\alpha}(\hbbeta_{/i}^\alpha)\right)
        \label{eq:diff_in_proj}\numberthis
    \end{align*}
    where the second line comes from Lemma \ref{lem:proj_property} and 
    $$\bar{\bJ}:=\int_0^1  \bJ(t) dt.$$ It is straightforward to use Lemma \ref{lem:proj_property} to show that
    \[
        0\leq \lambda_{\min} (\bar{\bJ}) \leq\lambda_{\max} (\bar{\bJ}) \leq 1.
    \]
    On the other hand,
    \begin{align*}
        &\hbbeta^\alpha-\nabla h^{\alpha}(\hbbeta^\alpha) - \hbbeta_{/i}^\alpha+\nabla h_{/i}^{\alpha}(\hbbeta_{/i}^\alpha)\\
        =& \hbbeta^\alpha - \hbbeta_{/i}^\alpha - \sum_{j\in [n]} \left[\dl_j(\hbbeta^\alpha) - \dl_j(\hbbeta_{/i}^\alpha)\right]\bx_j - \lambda \left[\nabla r^\alpha(\hbbeta^\alpha) -  \nabla r^\alpha(\hbbeta_{/i}^\alpha) \right] - 
        \dl_i(\hbbeta_{/i}^\alpha)\bx_i\\
        =&  \left( \bI - \bX^\top\diag[\ddl_j(\bxi_j)]_{j\in [n]}\bX - \lambda \nabla^2 r^\alpha(\bXi) \right)  (\hbbeta^\alpha - \hbbeta_{/i}^\alpha) - \dl_i(\hbbeta_{/i}^\alpha)\bx_i\label{eq:lem_betaloerror_diff}\numberthis
    \end{align*}
    where the second line uses the definition of $h^\alpha$ and the third line uses mean-value-theorem on $\dl_j$ and $\nabla r^\alpha$:  

    \[
        \ddl_j(\bxi_j) :=\int_0^1 \ddl_j(t\hbbeta^\alpha +(1-t)\hbbeta_{/i}^\alpha)dt
    \]
    with 
    \[
        \dl_j(\hbbeta^\alpha) - \dl_j(\hbbeta_{/i}^\alpha) = \ddl_j(\bxi_j) \bx_j^\top (\hbbeta^\alpha - \hbbeta_{/i}^\alpha)
    \]
    and likewise 

    \[
        \nabla^2 r^\alpha(\bXi) := \int_0^1 \nabla^2 r^\alpha( t \hbbeta^\alpha + (1-t) \hbbeta_{/i}^\alpha )dt
    \]
    with
    \[
        \nabla r^\alpha(\hbbeta^\alpha) -  \nabla r^\alpha(\hbbeta_{/i}^\alpha) = \nabla^2 r^\alpha(\bXi) (\hbbeta^\alpha - \hbbeta_{/i}^\alpha).
    \]
    Inserting this back to \eqref{eq:diff_in_proj} we have 

    \[
        \hbbeta^\alpha - \hbbeta_{/i}^\alpha = -\bG^{-1}\bar{\bJ}\left(
        \dl_i(\hbbeta_{/i}^\alpha)\bx_i\right)
    \]
    where 
    \begin{align*}
        \bG &:=  \bI + \bar{\bJ}\left(  \bX^\top\diag[\ddl_j(\bxi_j)]_{j\in[n]}\bX + \lambda \nabla^2 r^\alpha(\bXi) - \bI \right)\\
        & = \bI - \bar{\bJ} + \bar{\bJ}\left( \bX_{/i}^\top\diag[\ddl_j(\bxi_j)]_{j\neq i}\bX_{/i} + \lambda \nabla^2 r^\alpha(\bXi) \right).
    \end{align*}
    Since $\bI - \bar{\bJ}$ is positive semidefinite (note that all the eigenvalues of $\bar{\bJ}$ are in $[0,1]$ ) and  $\bX^\top\diag[\ddl_j(\bxi_j)]_{j\in[n]}\bX + \lambda \nabla^2 r^\alpha(\bXi)$ is positive definite (due to the ridge component), $\bG$ is also positive definite.
    Hence, we have
    \[
        \norm{\hbbeta^\alpha - \hbbeta_{/i}^\alpha}_2 \leq \left[\sigma_{\min}(\bG)\right]^{-1}\sigma_{\max}(\bar{\bJ})
        \left\Vert
        \dl_i(\hbbeta_{/i}^\alpha)\bx_i\right\Vert_2.
        \label{eq:normz_diff_beta_lo}\numberthis
    \]
    We have already established $\sigma_{\max}(\bar{\bJ})\leq 1$. Now we bound $\sigma_{\min}(\bG)$:
    \begin{align*}
        \sigma_{\min}(\bG) &= \lambda_{\min}(\bG)\\
        &= 1+ \lambda_{\min}\left( \bar{\bJ}\left(  \bX_{/i}^\top\diag[\ddl_j(\bxi_j)]_{j\notin I_i}\bX_{/i} + \lambda \nabla^2 r^\alpha(\bXi) - \bI \right) \right)\\
        &:= 1+\lambda_{\min}(\bar{\bJ} \bM)
    \end{align*}
    where $\bM := \bX_{/i}^\top\diag[\ddl_j(\bxi_j)]_{j\notin I_i}\bX_{/i} + \lambda \nabla^2 r^\alpha(\bXi) - \bI$. Observe that due to the existence of the ridge component in $r$, we have 
    \[\lambda_{\min}(\bM)\geq 2\lambda \eta - 1.\]
    \begin{itemize}
        \item If $\lambda_{\min}(\bM)\geq 0$, then we have
        \[
            \lambda_{\min}\left( \bar{\bJ}\bM\right) \geq \lambda_{\min}(\bar{\bJ})\lambda_{\min}(\bM)\geq 0.
        \]
        \item If $2\lambda\eta-1\leq \lambda_{\min}(\bM)<0$, then we have
        \[
            \lambda_{\min}\left( \bar{\bJ}\bM \right)\geq \lambda_{\max}(\bar{\bJ})\lambda_{\min}(\bM)\geq 2\lambda\eta -1.
        \]
    \end{itemize}
    Therefore we have
    \[
        \sigma_{\min}(\bG) \geq 1 + (2\lambda\eta -1)\wedge 0 = 2\lambda\eta \wedge 1.
    \]
    inserting this back to \eqref{eq:normz_diff_beta_lo} we have
    \[
        \norm{\hbbeta^\alpha - \hbbeta_{/i}^\alpha}\leq \frac{\norm{\dl_i(\hbbeta_{/i}^\alpha)\bx_i}}{2\lambda\eta\wedge 1}.
    \]
The next step of the proof is to use this upper bound on the smoothed estimates and obtain an upper bound for $\norm{\hbbeta - \hbbeta_{/i}}$. Towards this goal, we first prove the following lemma:  

\begin{lemma}\label{lem:beta_smooth_error}
    Under assumptions A1-A4, we have that
    \[
            \norm{\hbbeta^\alpha-\hbbeta} \leq \sqrt{\frac{2(1-\eta)}{\eta}\norm{r_0^\alpha - r_0}_\infty},
    \]
     and similarly
    \[
        \norm{\hbbeta^\alpha_{/i}-\hbbeta_{/i}} \leq \sqrt{\frac{2(1-\eta)}{\eta}\norm{r_0^\alpha - r_0}_\infty}.
    \]
\end{lemma}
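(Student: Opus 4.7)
The proof plan hinges on two observations. First, the only difference between the objectives $h$ and $h^\alpha$ is the non-ridge part of the regularizer:
\[
h(\bbeta) - h^\alpha(\bbeta) \;=\; \lambda(1-\eta)\bigl[r_0(\bbeta) - r_0^\alpha(\bbeta)\bigr],
\]
so this difference is uniformly bounded by $\lambda(1-\eta)\|r_0-r_0^\alpha\|_\infty$. Second, both $h$ and $h^\alpha$ inherit $2\lambda\eta$-strong convexity from the ridge term $\lambda\eta\bbeta^\top\bbeta$ (this is exactly the content of Assumption A4, as noted in Section \ref{subsec:discussion_assumptions}), and strong convexity persists on the convex set $\bTheta$ through the usual variational inequality at a constrained minimum.

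Given this, I would carry out the following three steps. \textbf{Step 1 (strong convexity at the minimizer of $h$).} Applying $2\lambda\eta$-strong convexity of $h$ at its constrained minimizer $\hbbeta\in\bTheta$, together with the first-order optimality condition $\langle \bg, \hbbeta^\alpha - \hbbeta\rangle \geq 0$ for a suitable subgradient $\bg$ (valid since $\hbbeta^\alpha\in\bTheta$), yields
\[
h(\hbbeta^\alpha) - h(\hbbeta) \;\geq\; \lambda\eta\,\|\hbbeta^\alpha - \hbbeta\|^2.
\]
\textbf{Step 2 (transfer via optimality of $\hbbeta^\alpha$ for $h^\alpha$).} Since $\hbbeta^\alpha$ minimizes $h^\alpha$ over $\bTheta$ and $\hbbeta\in\bTheta$, we have $h^\alpha(\hbbeta^\alpha)\le h^\alpha(\hbbeta)$. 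Substituting $h^\alpha = h - \lambda(1-\eta)(r_0 - r_0^\alpha)$ and rearranging,
\[
h(\hbbeta^\alpha) - h(\hbbeta) \;\leq\; \lambda(1-\eta)\bigl\{[r_0(\hbbeta^\alpha)-r_0^\alpha(\hbbeta^\alpha)] - [r_0(\hbbeta)-r_0^\alpha(\hbbeta)]\bigr\} \;\leq\; 2\lambda(1-\eta)\|r_0-r_0^\alpha\|_\infty.
\]
\textbf{Step 3 (combine).} Chaining the two inequalities,
\[
\lambda\eta\,\|\hbbeta^\alpha - \hbbeta\|^2 \;\leq\; 2\lambda(1-\eta)\|r_0-r_0^\alpha\|_\infty,
\]
and dividing through and taking a square root delivers the claimed bound $\|\hbbeta^\alpha-\hbbeta\|\le \sqrt{2(1-\eta)/\eta\cdot\|r_0^\alpha-r_0\|_\infty}$.

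The second inequality in the lemma, concerning $\hbbeta_{/i}^\alpha$ and $\hbbeta_{/i}$, follows by repeating the identical argument with $h$ and $h^\alpha$ replaced by $h_{/i}$ and $h_{/i}^\alpha$; neither the strong convexity constant $2\lambda\eta$ nor the uniform bound on the perturbation $\lambda(1-\eta)(r_0-r_0^\alpha)$ depends on whether the sum runs over $j\in[n]$ or $j\neq i$, so the proof is unchanged. I do not expect a serious obstacle here: the only place where care is needed is Step 1, where one must justify that the $2\lambda\eta$-strong convexity estimate remains valid over the constrained set $\bTheta$; this is standard and follows either from the quadratic minorant view of strong convexity or from the KKT-type inequality at the constrained minimum.
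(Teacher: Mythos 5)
Your proposal is correct and is essentially the mirror image of the paper's argument: both proofs sandwich an energy gap between $2\lambda(1-\eta)\|r_0-r_0^\alpha\|_\infty$ from above (using optimality of one minimizer plus the uniform bound on $h-h^\alpha$) and $\lambda\eta\|\hbbeta-\hbbeta^\alpha\|^2$ from below (using strong convexity plus first-order optimality of the other minimizer). The one substantive difference is which objective you expand: the paper works with the gap $h^\alpha(\hbbeta)-h^\alpha(\hbbeta^\alpha)$ and obtains the lower bound via a second-order Taylor expansion of the \emph{smoothed} objective, exploiting $\nabla^2 h^\alpha \succeq 2\lambda\eta \bI$ (with a footnote to patch up the vector-valued mean-value issue), whereas you work with $h(\hbbeta^\alpha)-h(\hbbeta)$ and invoke the subgradient form of strong convexity of the \emph{non-smooth} objective together with the variational inequality $\langle\bg,\hbbeta^\alpha-\hbbeta\rangle\ge 0$ at the constrained minimum. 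Your route is slightly cleaner in that it never uses differentiability of $r_0^\alpha$, so this lemma becomes logically independent of the smoothness conclusions of Lemma \ref{lem:gaussian_smooth}; the paper's route avoids having to appeal to subdifferential calculus on $\bTheta$. Both are complete and yield the identical constant.
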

\begin{proof}
 
    We have
    \begin{align*}\label{eq:h-alfa-diff}
        & h^\alpha(\hbbeta)-h^\alpha(\hbbeta^\alpha) = h^\alpha(\hbbeta)-h(\hbbeta^\alpha)+h(\hbbeta^\alpha)-h^\alpha(\hbbeta^\alpha)\\
        & \le  h^\alpha(\hbbeta)-h(\hbbeta)+h(\hbbeta^\alpha)-h^\alpha(\hbbeta^\alpha) \le  2\lambda(1-\eta) \norm{r_0^\alpha- r_0}_\infty\numberthis
    \end{align*}
    where the first inequality uses the fact that $\hbbeta,\hbbeta^\alpha$ are the minimizers of $h(\bbeta)$ and $h^\alpha(\bbeta)$ respectively, and the last inequality uses the definition of $h, h^\alpha$. On the other hand we also have from the second order mean value theorem (or Taylor expansion) that
    \begin{align*}\label{eq:h-alfa-diff2}
        &h^\alpha(\hbbeta)-h^\alpha(\hbbeta^\alpha)\\
        =& \nabla h^\alpha(\hbbeta^\alpha)^{\top}(\hbbeta-\hbbeta^\alpha)
        +\frac{1}{2}(\hbbeta-\hbbeta^\alpha)^\top
        \nabla^2 h^{\alpha}(\bXi)
        (\hat{\bbeta}-\hat{\bbeta}^{\alpha})\\
        \ge& \frac12 (\hat{\bbeta}-\hat{\bbeta}^{\alpha})^{\top}
        \nabla^2h^{\alpha}(\bXi)
        (\hat{\bbeta}-\hat{\bbeta}^{\alpha})\\
        \ge&~\lambda\eta\|\hbbeta-\hbbeta^\alpha\|^2.\numberthis
    \end{align*}
    In the first equality, by a slight abuse of the notation, we have written $\nabla^2h^\alpha(\bXi)$ as the Hessian in the second order Taylor expansion.\footnote{Since the mean-value theorem doesn't exist for vector-valued functions, the matrix $\nabla^2h^\alpha(\bXi)$ is actually the Hessian of $h^\alpha$, with each row evaluated at a different convex combination of $\hbbeta$ and $\hbbeta^\alpha$, as the matrix $\bXi$ indicates.}
    To obtain the second line, observe that 
    \[
    \nabla h_{\alpha}(\hat{\bbeta}^{\alpha})^\top (\hbbeta-\hbbeta^\alpha) = \left.\frac{\partial}{\partial t} h^\alpha((1-t)\hbbeta^\alpha + t\hbbeta)\right|_{t=0}.
    \]
    Since $\hbbeta^\alpha$ is the minimizer of $h^\alpha$, $t=0$ is the minimizer of $h^\alpha((1-t)\hbbeta^\alpha + t\hbbeta)$ for $t\in [0,1]$. And since $h^\alpha$ is smooth on $(0,1)$ we must have $\nabla h_{\alpha}(\hat{\bbeta}^{\alpha})^\top (\hbbeta-\hbbeta^\alpha) = \left.\frac{\partial}{\partial t} h^\alpha((1-t)\hbbeta^\alpha + t\hbbeta)\right|_{t=0}\geq0$. The last line of \eqref{eq:h-alfa-diff2} is because $\nabla^2h^\alpha(\bXi)$ is positive-definite and $\sigma_{\min}(\nabla^2h^\alpha(\bXi))\geq 2\lambda\eta$  due to the existence of the ridge component.

    Combining \eqref{eq:h-alfa-diff} and \eqref{eq:h-alfa-diff2} we have
    \begin{align*}
        \norm{\hbbeta-\hbbeta^\alpha}^2 \leq \frac{2(1-\eta)}{\eta}\norm{r_0^\alpha-r_0}_\infty.
    \end{align*}
    Using a similar argument we can also prove that 
    \[
        \norm{\hbbeta_{/i}-\hbbeta_{/i}^\alpha}^2 \leq \frac{2(1-\eta)}{\eta}\norm{r_0^\alpha-r_0}_\infty.
    \]
\end{proof}
From lemma \ref{lem:gaussian_smooth} we can see that $\norm{r_0^\alpha-r_0}_\infty \rightarrow 0$ as $\alpha \rightarrow \infty$. 

Continuing with the proof of Lemma \ref{lem:beta_lo_error}, by Lemma \ref{lem:beta_smooth_error}, $\hbbeta^\alpha\to \hbbeta$ and $\hbbeta_{/i}^\alpha \to \hbbeta_{/i}$ when $\alpha\to\infty$. Hence, we can let $\alpha\to\infty$ and have
\[
    \norm{\hbbeta - \hbbeta_{/i}}\leq \frac{\norm{\dl_i(\hbbeta_{/i})\bx_i}}{2\lambda\eta\wedge 1}.
\]
\end{proof}

\subsection{Proof of Lemma \ref{lem:mean_dphi0}}\label{subsec:proof_mean_dphi0}
We first obtain an upper bound for the $4^{\rm th}$ moment. Using Assumption A5, we have

\begin{align*}
    \sqrt{\EE\dphi_0^4(\bbeta)}
    &\leq \sqrt{\EE \left( 1+|y_0|^s + |\bx_0^\top \bbeta|^s  \right)^4}
    \\
    &\leq \sqrt{27(1+\EE|y_0|^{4s}+\EE|\bx_0^\top \bbeta|^{4s})}
    \\
    &\leq \sqrt{ 27\left(1+C_Y(4s)+(4s-1)!!\left(\frac{C_X}{p}\|\bbeta\|^2\right)^{2s}\right) }
    \\
    &\leq \sqrt{27(1+C_Y(4s))} + \sqrt{27(4s-1)!!}C_X^s \left(\frac{C_X}{p}\|\bbeta\|^2\right)^s.
\end{align*}
where the second line uses the simple equation $(a+b+c)^4\leq 27a^4+27b^4+27c^4$, the third line uses  $\bx_0^\top \bbeta\sim N(0,\bbeta^\top\bSigma\bbeta)$ with $\bbeta^\top\bSigma\bbeta\leq \frac{C_X}{p}\|\bbeta\|^2$. It also uses the moment formula for standard Gaussian variable. The last line uses the equation $\sqrt{a+b}\leq \sqrt{a}+\sqrt{b}$ for $a,b\geq 0$.
Define $C_{\phi}^2 = \max\{ \sqrt{27(1+C_Y(4s))} , \sqrt{27(4s-1)!!}C_X^s  \}$, then
\begin{align*}
    \sqrt{\EE\dphi_0^4(\bbeta)} \leq C_{\phi}^2 \left(1+\left(\frac{C_X}{p}\|\bbeta\|^2\right)^s \right)
\end{align*}
For the second moment, notice that 
\begin{align*}
    \sqrt{\EE\dphi_0^2(\bbeta)} 
    &\leq [\EE\dphi_0^4(\bbeta)]^{\frac14}
    \\
    &\leq C_{\phi} \sqrt{1+\left(\frac{C_X}{p}\|\bbeta\|^2\right)^s }
    \\
    &\leq C_{\phi} \left(1+\left(\frac{C_X}{p}\|\bbeta\|^2\right)^{s/2}\right).
\end{align*}
where the last step uses $\sqrt{a+b}\leq \sqrt{a}+\sqrt{b}$ again. The same arguments lead to the same bound for $\sqrt{\EE\phi_0^2(\bbeta)} $.

\subsection{Proof of Lemma \ref{lem:assumption_a5}}\label{subsec:assn_a5}
\begin{proof}
\begin{enumerate}
    \item[(a)]
    Without loss of generality we assume $C=1$ in Assumption A5.
    Throughout this proof we use $h(\bbeta), h_{/i}(\bbeta), h^\alpha(\bbeta)$ and $h_{/i}^\alpha(\bbeta)$ for the loss functions of the corresponding models, defined as:
        \begin{align*}
        h(\bbeta)&:=\sum_{j=1}^n \ell_j(\bbeta) + \lambda (1-\eta) r_0(\bbeta) + \lambda \eta \bbeta^\top\bbeta,\\
        h^\alpha(\bbeta)&:=\sum_{j=1}^n \ell_j(\bbeta) + \lambda (1-\eta) r_0^\alpha(\bbeta) + \lambda \eta \bbeta^\top\bbeta, \\
                h_{/ i}(\bbeta)&:=\sum_{j\neq i}^n \ell_j(\bbeta) + \lambda (1-\eta) r_0(\bbeta) + \lambda \eta \bbeta^\top\bbeta,\\
        h_{/i}^\alpha(\bbeta)&:=\sum_{j\neq i}^n \ell_j(\bbeta) + \lambda (1-\eta) r_0^\alpha(\bbeta) + \lambda \eta \bbeta^\top\bbeta.
    \end{align*}
    It is straightforward to show that 
    \begin{align*}
        \lambda\eta \|\hbbeta\|_2^2\le&~\sum_{j\in[n]} \ell(y_j,\bx_j^\top\hbbeta) + \lambda(1-\eta)r_0(\hbbeta)+\lambda \eta \|\hbbeta\|_2^2
        \leq~ \sum_{j=1}^n \ell(y_j,0), 
    \end{align*}
    where the last inequality is due to the fact that $h(\hbbeta) \leq h(\mathbf{0})$ and that $\ell(y,z)\geq 0$. Similarly we have $\lambda\eta \|\hbbeta_{/i}\|_2^2\leq \sum_{j=1}^n \ell(y_j,0)$.

    We can then use Assumption A5 to obtain
    \begin{align*}
        \EE \left(\frac1p \|\hbbeta\|^2\right)^{t}
        \leq (p\lambda\eta)^{-t}\EE [n+|y_1|^s+\cdots+|y_n|^s]^{t}.
    \end{align*}
    By Rosenthal inequality (Lemma \ref{lem:rosenthal}),  we have
    \begin{align*}
        \EE \left(\frac1p \|\hbbeta\|^2\right)^{t}
        &\leq (p\lambda\eta)^{-t} A(t) \max\left\{ n^{t}+ n\EE|y_1|^{st}, (n+n\EE|y_1|^s)^{t} \right\}
        \\
        &\leq \left(\frac{\gamma_0}{\lambda\eta}\right)^{t}A(t)
        \max\left\{ 1+n^{1-t}C_Y(st), (1+C_Y(s))^{t} \right\}
        \\
        &\leq \left(\frac{\gamma_0}{\lambda\eta}\right)^{t}A(t)
        \max\left\{ 1+C_Y(st), (1+C_Y(s))^{t} \right\}
        := C_{\beta}(t)\label{eq:high_moment_bbeta}\numberthis,
    \end{align*}
    where $C_Y(\cdot)$ is the constant in Assumption A5, and $A(\cdot)$ is the Rosenthal constant in Lemma \ref{lem:rosenthal}.

    \item[(b)] Our next goal is to obtain an upper bound for $\EE\dl_1^8(\hbbeta_{/1})$. By using Assumption A5 we have
     \begin{align*}
        \EE\dl_1^8(\hbbeta_{/1})  
        &\leq \EE [1+|y_1|^s+|\bx_1^\top\hbbeta_{/1}|^s]^8
        \\
        &\leq 3^7 \left( 1+\EE|y_1|^{8s} + \EE|\bx_1^\top\hbbeta_{/1}|^{8s} \right).\numberthis
        \label{eq:moment_xTbeta_part1}
    \end{align*}

    Next we bound $\EE|\bx_1^\top\hbbeta_{/1}|^{8s}$:
    \begin{align*}
        \EE|\bx_1^\top\hbbeta_{/1}|^{8s}
        &= \EE \left[\EE[ |\bx_1^\top\hbbeta_{/1}|^{8s} |\hbbeta_{/1}]\right]
        \\
        &\leq \EE [(8s-1)!! (\hbbeta_{/1}^\top\bSigma\hbbeta_{/1})^{4s}]
        \\
        &\leq (8s-1)!!  \EE \left(\frac{C_X}{p}\|\hbbeta_{/1}\|^2\right)^{4s}
        \\
        &\leq (8s-1)!! C_X^{4s} C_{\beta}(4s)
    \end{align*}
    where the second line uses the fact that $\bx_1^\top\hbbeta_{/1}|\hbbeta_{/1} \sim N(0,\hbbeta_{/1}^\top\bSigma\hbbeta_{/1})$ and that the $t^{\rm th}$ moment of a standard Gaussian variable is $(t-1)!!$ whenever $t$ is a positive even number. The last line uses \eqref{eq:high_moment_bbeta}. Inserting this back to \eqref{eq:moment_xTbeta_part1} we have:
    \begin{align*}
        \EE\dl_1^8(\hbbeta_{/1})  
        \leq 3^7(1+C_Y(8s) + (8s-1)!!C_X^{4s}C_{\beta}(4s))
        :=C_{\ell}.
    \end{align*}
    \end{enumerate}
\end{proof}

\subsection{Proof of Lemma~\ref{lem:v1}}\label{sec:pf-lemv1}
Recall that
\[
    \LO = \frac1n \sum_{i=1}^n\phi_j(\hbbeta_{/i}).
\]

Then we have
\begin{align*}\label{eq:V1bd}
    \EE V_1^2=&\EE \left(\LO  - \frac{1}{n} \sum_{i=1}^n \EE[ \phi_i( \hbbeta_{/i}) |  \cD_{/i}] \right)^2 \\
    =& \frac{1}{n} \EE \left(\phi_1(\hbbeta_{/1}) - \EE [\phi_1(\hbbeta_{/1})|\cD_{/1}]\right)^2
    \\
    &+
    \frac{n-1}{n}\EE \left(\phi_1(\hbbeta_{/1}) - \EE [\phi_1(\hbbeta_{/1})|\cD_{/1}]\right)
    \cdot \left(\phi_{2}(\hbbeta_{/2}) - \EE [\phi_2(\hbbeta_{/2})|\cD_{/2}]\right).\numberthis
\end{align*}
Note that 
\begin{align*}
    &~\EE \left[\left(\phi_1(\hbbeta_{/1}) - \EE (\phi_1(\hbbeta_{/1})|\cD_{/1})\right)\right]^2\\
    =&~ \EE \var(\phi_1(\hbbeta_{/1})|\cD_{/1}) \\
    \leq&~ \EE \left(\EE[\phi_1^2(\hbbeta_{/1})|\cD_{/1}]\right)\\
    \leq&~ \EE \left( C_{\phi} + C_{\phi}\left( \frac1p \|\hbbeta_{/1}\|^2\right)^{\frac{s}{2}} \right)^2\\
    \leq&~ 2C_{\phi}^2 + 2C_{\phi}^2 \EE \left(\frac1p \|\hbbeta_{/1}\|^2\right)^{s}\\
    \leq&~ 2C_{\phi}^2\left(1+ C_{\beta}(s)\right):=C_{v1,1}\numberthis\label{eq:v1_first_term}
\end{align*}
where the fourth line uses Lemma \ref{lem:mean_dphi0}, and the last line uses part (a) of Lemma \ref{lem:assumption_a5}.
Next, we study
\[
    \EE \left(\phi_1(\hbbeta_{/1}) - \EE (\phi_1(\hbbeta_{/1})|\cD_{/1})\right)
    \cdot \left(\phi_{2}(\hbbeta_{/2}) - \EE (\phi_{2}(\hbbeta_{/2})|\cD_{/2})\right).
    \label{eq:v1_cross_term}\numberthis
\]
Define $ \hbbeta_{/12} :=  \underset{\bbeta \in \bTheta}{\argmin}  \Bigl \{   \sum_{j\geq 3}  \ell ( y_j,   \bx_j^\top \bbeta ) + \lambda r(\bbeta)  \Bigr \}$. By the mean-value theorem, there exists a random variable $t\in [0,1]$ such that
\[
    \phi_1(\hbbeta_{/1}) = \phi_1(\hbbeta_{/12}) + \dphi_1(t\hbbeta_{/1} + (1-t)\hbbeta_{/12}) \bx_1^\top (\hbbeta_{/1}-\hbbeta_{/12}).
\]
Hence,
\begin{align*}
    &~\EE [\phi_1(\hbbeta_{/1})|\cD_{/1}]\\
    =&~ \EE [\phi_1(\hbbeta_{/12})|\cD_{/1}] 
      + \EE [\dphi_1(t\hbbeta_{/1} + (1-t)\hbbeta_{/12}) \bx_1^\top (\hbbeta_{/1}-\hbbeta_{/12})|\cD_{/1}]\\
    =&~ \EE [\phi_0(\hbbeta_{/12})|\cD_{/12}] 
      + \EE [\dphi_0(t\hbbeta_{/1} + (1-t)\hbbeta_{/12})\bx_0^\top|\cD_{/1}](\hbbeta_{/1}-\hbbeta_{/12}).
\end{align*}
Similarly, we have
\[
    \phi_{2}(\hbbeta_{/2}) = \phi_{2}(\hbbeta_{/12}) + \dphi_{2}(t\hbbeta_{/2} + (1-t)\hbbeta_{/12}) \bx_{2}^\top (\hbbeta_{/2}-\hbbeta_{/12}). 
\]
and
\[
    \EE[\phi_{2}(\hbbeta_{/2})|\cD_{/2}] 
    = \EE [\phi_0(\hbbeta_{/12})|\cD_{/12}] 
      + \EE [\dphi_0(t\hbbeta_{/2} + (1-t)\hbbeta_{/12})\bx_0^\top|\cD_{/2}](\hbbeta_{/2}-\hbbeta_{/12})
\]
Then \eqref{eq:v1_cross_term} can be decomposed into four terms:
\begin{align*}\label{eq:decABCD}
    &\EE \left(\phi_1(\hbbeta_{/1}) - \EE (\phi_1(\hbbeta_{/1})|\cD_{/1})\right)
    \cdot \left(\phi_{2}(\hbbeta_{/2}) - \EE (\phi_{2}(\hbbeta_{/2})|\cD_{/2})\right)\\
    =& A_1 + B_1 + C_1 + D_1,\numberthis
\end{align*}
where $A_1, B_1, C_1,$ and $D_1$ are defined as 
\begin{align}
   A_1&:=~
    \EE \left( \phi_1(\hbbeta_{/12})  - \EE [ \phi_0(\hbbeta_{/12})  | \cD_{/12}] \right)\left(\phi_{2}( \hbbeta_{/12})  - \EE [ \phi_0( \hbbeta_{/12}) |  \cD_{/12}] \right) \nonumber \\
        B_1 &:=
    \EE \Big[ \left ( \phi_1(\hbbeta_{/12})  - \EE\bigl[ \phi_0(  \hbbeta_{/12})   |\cD_{/12} \bigr] \right ) \nonumber \\
    &\quad \times
    \Big ( \dphi_{2}(\hbbeta_{/2} + (1-t)\hbbeta_{/12}) \bx_{2}^\top (\hbbeta_{/2}-\hbbeta_{/12})- \EE [  \dphi_{2}(\hbbeta_{/2} + (1-t)\hbbeta_{/12}) \bx_{2}^\top (\hbbeta_{/2}-\hbbeta_{/12}) |\cD_{/2}  ]\Big) \Big] \nonumber \\
       C_1
    &:= \EE \Big [ \left ( \phi_{2}(\hbbeta_{/12})  - \EE [ \phi_0(  \hbbeta_{/12})  | \cD_{/12}] \right ) \nonumber \\
    &\quad \times
    \Big ( \dphi_1(\hbbeta_{/1} + (1-t) \hbbeta_{/12}) \bx_1^\top (t\hbbeta_{/1}-\hbbeta_{/12})- \EE \left [  \dphi_1(t \hbbeta_{/1} + (1-t)\hbbeta_{/12}) \bx_1^\top (\hbbeta_{/1}-\hbbeta_{/12}) |  \cD_{/1}  \right]\Big) \Big] \nonumber \\
        D_1 &:= 
    \EE \Biggl\{
      \biggl( \dphi_1(t\hbbeta_{/1} + (1-t)\hbbeta_{/12}) \bx_1^\top (\hbbeta_{/1}-\hbbeta_{/12}) - \EE \bigl[  \dphi_1(t\hbbeta_{/1} + (1-t) \hbbeta_{/12}) \bx_1^\top (\hbbeta_{/1}-\hbbeta_{/12}) |  \cD_{/1}  \bigr]\biggr) \nonumber \\
    &\quad \times \biggl( \dphi_{2}(\hbbeta_{/2} + (1-t)\hbbeta_{/12}) \bx_{2}^\top (\hbbeta_{/2}-\hbbeta_{/12})- \EE \bigl [  \dphi_{2}(\hbbeta_{/2} + (1-t)\hbbeta_{/12}) \bx_{2}^\top (\hbbeta_{/2}-\hbbeta_{/12}) |  \cD_{/2}  \bigr]\biggr) 
    \Biggr\}. \nonumber 
\end{align}
We bound each of these four terms below. For $A_1$ we have

\begin{align*}\label{eq:A1}
    A_1:=&~
    \EE \left( \phi_1(\hbbeta_{/12})  - \EE [ \phi_0(\hbbeta_{/12})  | \cD_{/12}] \right)\left(\phi_{2}( \hbbeta_{/12})  - \EE [ \phi_0( \hbbeta_{/12}) |  \cD_{/12}] \right) \\
    =&~
    \EE\left[ \EE\left[  \left( \phi_1(\hbbeta_{/12})  - \EE [ \phi_0(\hbbeta_{/12})  | \cD_{/12}] \right)\left.\left(\phi_{2}( \hbbeta_{/12})  - \EE [ \phi_0( \hbbeta_{/12}) |  \cD_{/12}] \right)  \right|\cD_{/12}\right] \right]\\
    =&~0.\numberthis
\end{align*}
where the last equality is correct, because given $\cD_{/12}$, $\phi_1(\hbbeta_{/12})$ and $\phi_{2}(\hbbeta_{/12})$ are conditionally independent, and  hence the inner expectation can be taken to each term separately. Similarly,
\begin{align*}\label{eq:B1}
    B_1 &:=
    \EE \Big[ \left ( \phi_1(\hbbeta_{/12})  - \EE\bigl[ \phi_0(  \hbbeta_{/12})   |\cD_{/12} \bigr] \right ) \\
    &\quad \times
    \Big ( \dphi_{2}(\hbbeta_{/2} + (1-t)\hbbeta_{/12}) \bx_{2}^\top (\hbbeta_{/2}-\hbbeta_{/12})- \EE [  \dphi_{2}(\hbbeta_{/2} + (1-t)\hbbeta_{/12}) \bx_{2}^\top (\hbbeta_{/2}-\hbbeta_{/12}) |\cD_{/2}  ]\Big) \Big]\\
    &=\EE\Big\{\EE\Big[  
     \left ( \phi_1(\hbbeta_{/12})  - \EE\bigl[ \phi_0(  \hbbeta_{/12})   |\cD_{/12} \bigr] \right ) \\
    &\quad \times
    \left.\Big ( \dphi_{2}(\hbbeta_{/2} + (1-t)\hbbeta_{/12}) \bx_{2}^\top (\hbbeta_{/2}-\hbbeta_{/12})- \EE [  \dphi_{2}(\hbbeta_{/2} + (1-t)\hbbeta_{/12}) \bx_{2}^\top (\hbbeta_{/2}-\hbbeta_{/12}) |\cD_{/2}  ]\Big)
    \right| \cD_{/2}\Big]\Big\}
    \\
    &=0.\numberthis
\end{align*}
Again, the last equality is correct, because given $\cD_{/2}$, $\phi_1(\hbbeta_{/12})$ and $\dphi_{2}(\hbbeta_{/2} + (1-t)\hbbeta_{/12}) \bx_{2}$ are conditionally independent. Similarly, 
\begin{align*}\label{eq:C1}
    C_1
    &:= \EE \Big [ \left ( \phi_{2}(\hbbeta_{/12})  - \EE [ \phi_0(  \hbbeta_{/12})  | \cD_{/12}] \right ) \\
    &\quad \times
    \Big ( \dphi_1(\hbbeta_{/1} + (1-t) \hbbeta_{/12}) \bx_1^\top (t\hbbeta_{/1}-\hbbeta_{/12})- \EE \left [  \dphi_1(t \hbbeta_{/1} + (1-t)\hbbeta_{/12}) \bx_1^\top (\hbbeta_{/1}-\hbbeta_{/12}) |  \cD_{/1}  \right]\Big) \Big] \\
    &= 0.\numberthis
\end{align*}
Finally,
\begin{align*}
    D_1 &:= 
    \EE \Biggl\{
      \biggl( \dphi_1(t\hbbeta_{/1} + (1-t)\hbbeta_{/12}) \bx_1^\top (\hbbeta_{/1}-\hbbeta_{/12}) - \EE \bigl[  \dphi_1(t\hbbeta_{/1} + (1-t) \hbbeta_{/12}) \bx_1^\top (\hbbeta_{/1}-\hbbeta_{/12}) |  \cD_{/1}  \bigr]\biggr)\\
    &\quad \times \biggl( \dphi_{2}(\hbbeta_{/2} + (1-t)\hbbeta_{/12}) \bx_{2}^\top (\hbbeta_{/2}-\hbbeta_{/12})- \EE \bigl [  \dphi_{2}(\hbbeta_{/2} + (1-t)\hbbeta_{/12}) \bx_{2}^\top (\hbbeta_{/2}-\hbbeta_{/12}) |  \cD_{/2}  \bigr]\biggr)
    \Biggr\}\\
    &\overset{(a)}{\leq }\EE \var \left[ \dphi_1(t\hbbeta_{/1} + (1-t) \hbbeta_{/12}) \bx_1^\top (\hbbeta_{/1}-\hbbeta_{/12})  |  \cD_{/1}  \right]  \\
    &\leq \EE \left[  \EE \left[\dphi_1^2(t\hbbeta_{/1} + (1-t)\hbbeta_{/12}) [\bx_1^\top (\hbbeta_{/1}-\hbbeta_{/12})]^2  |  \cD_{/1} \right]  \right]  \\
    &\overset{(b)}{\leq } \EE \left[ \sqrt{\EE[\dphi_1^4(\bxi_1)|\cD_{/1}]}\sqrt{\EE\left[[\bx_1^\top(\bbeta_{/1}-\bbeta_{/12})]^4|\cD_{/1}\right]} \right] \\
    &\overset{(c)}{\leq } \EE\left[ C_{\phi}^2\left(1+\left( \frac1p\|\bxi\|^2 \right)^s\right)\cdot \frac{\sqrt{3}C_X}{p}\| \hbbeta_{/1}-\hbbeta_{/12} \|^2 \right]
    \\
    &\overset{(d)}{\leq } \frac{\sqrt{3}C_{\phi}^2 C_X}{p(2\lambda\eta\wedge 1)^2}\EE\left[ \left(1+\left( p^{-1}\|\bxi\|^2 \right)^s\right) |\dl_2(\hbbeta_{/12})|^2\|\bx_2\|^2 \right]
    \\
    &\overset{(e)}{\leq } \frac{\sqrt{3}C_{\phi}^2 C_X}{p(2\lambda\eta\wedge 1)^2}\sqrt{ \EE  \left(1+\left( p^{-1}\|\bxi\|^2 \right)^s\right)^2}\cdot\left(\EE |\dl_2(\hbbeta_{/12})|^8 \right)^{\frac14} \left(\EE\|\bx_2\|^8\right)^{\frac14} 
    \\
    &\overset{(f)}{\leq } \frac{\sqrt{3}\gamma_0 C_{\phi}^2 C_X}{n(2\lambda\eta\wedge 1)^2}
    \sqrt{2+2C_{\beta}(2s)}C_{\ell}^{\frac14} (24C_X^4)^{\frac14}
    \\
    &:=\frac{C_{v1,2}}{n}
    \label{eq:v1_d1_last_step}\numberthis.
\end{align*}

where inequality (a) uses Cauchy Schwarz inequality and symmetry between $\dphi_1$ and $\dphi_2$, (b) uses Cauchy Schwarz again, (c) uses Lemma \ref{lem:mean_dphi0} and the fact that, conditioned on $\cD$, $\bx_1^\top(\hbbeta_{/1}-\hbbeta_{/12})\sim N(0,\hbbeta_{/1}-\hbbeta_{/12}^\top\bSigma\hbbeta_{/1}-\hbbeta_{/12})$. Inequality (d) uses Lemma \ref{lem:beta_lo_error}. Inequality (e) uses Cauchy Schwarz twice. Finally inequality (f) uses Lemma \ref{lem:assumption_a5} and Lemma \ref{lem:sum_xi_conc}. Plugging in the results of equations~\eqref{eq:A1}-\eqref{eq:v1_d1_last_step} into \eqref{eq:decABCD}, and then using \eqref{eq:v1_first_term}, the bound in \eqref{eq:V1bd} boils down to
\begin{align*}
    \EE V_1^2 
    \leq \dfrac{C_{v1,1}+C_{v1,2} }{n} := \frac{C_{v1}}{n}
\end{align*}
where, after some simplification:
\[
    C_{v1} = 2C_{\phi}^2\left(1+ C_{\beta}(s)\right) + \frac{\sqrt{6}(24)^{\frac14} C_{\phi}^2 C_X^2C_{\ell}^{\frac14}\gamma_0}{(2\lambda\eta\wedge 1)^2}
    \sqrt{1+C_{\beta}(2s)}. \label{eq:constant_v2}\numberthis
\]
This finishes the proof of Lemma~\ref{lem:v1}.
\qed

\end{document}